\documentclass[reqno,11pt]{amsart}
\usepackage[english]{babel}
\usepackage{amsmath,amsfonts,amsthm,amssymb,amsbsy,upref,color,graphicx,amscd,hyperref,enumerate,bbm,comment,algorithm,algorithmic}
\usepackage{mathtools,url}
\usepackage[a4paper,margin=3.00truecm]{geometry}
\usepackage[active]{srcltx}
\usepackage[mathscr]{eucal}
\usepackage{enumitem}

\newtheorem{theorem}{Theorem}[section]
\newtheorem{lemma}[theorem]{Lemma}
\newtheorem{proposition}[theorem]{Proposition}
\newtheorem{corollary}[theorem]{Corollary}
\newtheorem{definition}[theorem]{Definition}
\theoremstyle{remark}
\newtheorem{remark}[theorem]{Remark}
\newtheorem{example}[theorem]{Example}

\numberwithin{equation}{section}

\makeatletter
\newcommand{\proofpart}[2]{%
  \par
  \addvspace{\medskipamount}%
  \noindent\emph{Part #1: #2}\par\nobreak
  \addvspace{\smallskipamount}%
  \@afterheading
}
\makeatother

\newcommand{\smallO}[1]{\scriptstyle\mathcal{O}}

 % coniugato
\newcommand{\restr}{\mathbin{\vrule height 1.6ex depth 0pt width 0.13ex\vrule height 0.13ex depth 0pt width 1.3ex}}
\newcommand{\bib}[4]{\bibitem{#1}{\sc#2: }{\it#3. }{#4.}}

\newcommand{\R}{\mathbb R}

\newcommand{\N}{\mathbb N}

\newcommand{\Om}{\Omega}

\newcommand{\cA}{\mathcal A}

\newcommand{\cH}{\mathcal H}
\newcommand{\cF}{\mathcal F}

\newcommand{\cJ}{\mathcal J}
\newcommand{\eps}{\varepsilon}

\newcommand{\divs}{\mathrm{div}}

\newcommand{\cL}{\mathcal L}
\newcommand{\be}{\begin{equation}}
\newcommand{\ee}{\end{equation}}
\newcommand{\X}{\mathfrak X}

\newcommand{\per}{\mathrm{Per}}

\newcommand{\sbv}{\mathrm{SBV}}
\newcommand{\gsbv}{\mathrm{GSBV}}
\newcommand{\bv}{\mathrm{BV}}

%%Average integral
\def\Xint#1{\mathchoice
{\XXint\displaystyle\textstyle{#1}}%
{\XXint\textstyle\scriptstyle{#1}}%
{\XXint\scriptstyle\scriptscriptstyle{#1}}%
{\XXint\scriptscriptstyle\scriptscriptstyle{#1}}%
\!\int}
\def\XXint#1#2#3{{\setbox0=\hbox{$#1{#2#3}{\int}$ }
\vcenter{\hbox{$#2#3$ }}\kern-.58\wd0}}

\def\dashint{\Xint-}

\newcommand{\brmk}{\begin{remark}}
\newcommand{\ermk}{\end{remark}}
\newcommand{\bd}{\begin{definition}}
\newcommand{\ed}{\end{definition}}
\newcommand{\bt}{\begin{theorem}}
\newcommand{\et}{\end{theorem}}
\newcommand{\bl}{\begin{lemma}}
\newcommand{\el}{\end{lemma}}
\newcommand{\bpr}{\begin{proposition}}
\newcommand{\epr}{\end{proposition}}
\newcommand{\bex}{\begin{example}}
\newcommand{\eex}{\end{example}}

\title{Shape optimization problems for functionals with a boundary integral}

\author{Giuseppe Buttazzo and Francesco Paolo Maiale}

%\date{}

\begin{document}

\maketitle
{\it Dedicated to Umberto Mosco for his 80th birthday}

\begin{abstract}
We consider shape optimization problems for general integral functionals of the calculus of variations that may contain a boundary term. In particular, this class includes optimization problems governed by elliptic equations with a Robin condition on the free boundary. We show the existence of an optimal domain under rather general assumptions and we study the cases when the optimal domains are open sets and have a finite perimeter.
\end{abstract}

\medskip{\bf Keywords: }shape optimization, integral functionals, Robin condition, finite perimeter.
\medskip{\bf2010 Mathematics Subject Classification: }49Q10, 49A15, 49A50, 35J20, 35D10.

%%%%%%%%%%%%%%%%%%%%%%%%%%%%%%%%%%%%%%%%%%%%%%%%%%%%%%%%%%%%
\section{Introduction}\label{sec:intro}

Let $D \subset \R^d$ be a bounded open set with a Lipschitz boundary. In this paper, we are interested in the following {\em shape optimization problem}.

%% Per cambiare l'indentazione del problema basta variare .08 e .92 in modo che la somma sia sempre uno.
\medskip
\noindent\hspace{.08\linewidth} \begin{minipage}{.92\textwidth}
{\bf Problem.} Find a domain $\Om$ which solves the minimization problem
$$\inf\big\{\cJ(\Om)\ :\ \Om\subset D,\ \Om\text{ Lipschitz}\big\},$$
where the {\em shape functional} $\cJ$ is defined by
$$\cJ(\Om):=\min\left\{\int_\Om j(x,u,\nabla u)\,dx+\int_{\partial\Om} g(x,u)\,d\cH^{d-1}\ :\ u\in W^{1,p}(\Om)\right\}.$$
Here $p>1$, $\cH^{d-1}$ is the $(d-1)$-dimensional Hausdorff measure, and the integrands $j$ and $g$ satisfy suitable properties.
\end{minipage}
\medskip

\noindent The prototype of our class of integral functionals can be obtained by solving the PDE with {\it Robin boundary condition}
\be\label{robinpde}
\begin{cases}
-\Delta u=f&\text{in $\Om$}\\
\beta u+\partial_\nu u=0&\text{on $\partial\Om$}
\end{cases}\ee
and minimizing the corresponding energy
$$\frac12\int_\Om|\nabla u|^2\,dx-\int_\Om fu\,dx+\frac\beta2\int_{\partial\Om}u^2\,d\cH^{d-1},$$
among all domains $\Om\subset D$ with prescribed Lebesgue measure $|\Om|$. This corresponds to the integrands
$$j(x,s,z)=\frac12|z|^2-f(x)s+c,\qquad g(x,s)=\frac\beta2 s^2,$$
where $c$ is the Lagrange multiplier associated to the measure constraint on $\Om$. The stability of solutions of elliptic equations under Robin boundary conditions, with respect to the variation of the domain, has been studied in \cite{bugitr16}.

When the Robin boundary condition is replaced by the Dirichlet condition
$$u=0\hbox{ on }\partial\Om,$$
and $g(x,0)=0$, the boundary integral disappears, and the corresponding shape optimization problem has been considered in \cite{butshr}. The shape optimization problem under Robin boundary condition on $\partial\Om$, in a general form, was first studied by Bucur-Giacomini in \cite{bugi16}. They considered the right-hand side $f=0$ but with a Dirichlet condition of the form $u=u_0$ on a prescribed part $D_0$ of $D$. The addition of a zero-order term in our framework is the natural variational formulation of problem \eqref{robinpde} and raises several technical difficulties.

The key idea, introduced in \cite{bugi16}, consists in extending all functions $u\in W^{1,p}(\Om)$ to the whole $\R^d$ by setting $u=0$ outside $\Om$. In this way, the Sobolev property of the extension is clearly lost; however, the extended functions belong to some $\sbv$ space, which allows us to write the boundary integral corresponding to a function $u$ as an integral on the {\it jump set} of $u$. In this way, the dependence on the domain $\Om$ can be removed, and the problem is reduced to the minimization of a functional of the calculus of variations which includes a standard integral term and an additional integral term over the jump set of the competing functions $u$.

The main result (see Section \ref{sec:results} for more details) of our paper is the existence of an optimal shape $\Om$ which is open and has finite perimeter. To achieve this result we first consider the relaxation of our initial problem to the family of sets
$$\cA(D):=\left\{\Om\subset D\ :\ \text{$\Om$ open, $\partial\Om$ is $\cH^{d-1}$-rectifiable and $\cH^{d-1}(\partial\Om)<\infty$}\right\}$$
by setting for $\Om\in\cA(D)$
\be\label{functionalrelaxed}
\cJ(\Om):=\inf_{u\in W^{1,p}(\Om)}\left\{\int_\Om j(x,u,\nabla u)\,dx+\int_{\partial\Om}\left[g(x,u^+)+g(x,u^-)\right]\,d\cH^{d-1}\right\},
\ee
where $\nabla u$, $u^+$, $u^-$ will be defined in Section \ref{sec:setting}. Next, we show that the shape optimization problem can be reformulated in terms of the following {\it free discontinuity functional}
\be\label{eq.intro2}
\cF(u)=\int_{\{u\neq0\}} j(x,u,\nabla u)\,dx+\int_{J_u}\left[g(x,u^+)+g(x,u^-)\right]\,d\cH^{d-1},
\ee
defined on the set of functions
$$\mathscr{F}_D:=\left\{u\in\sbv(\R^d)\ :\ u=0\text{ on }\R^d\setminus D\right\},$$
where $J_u$ denotes the jump set of $u$. The paper is organized as follows:
\begin{enumerate}
\item The relaxed shape optimization problem reduces to a problem of the calculus of variations; more precisely, we consider
\be\label{eq.intro3}
\min\big\{\cF(u)\ :\ u\in\mathscr{F}_D\big\}.
\ee
Indeed, if $\bar{\Om}$ is an optimal shape for the minimization problem associated to the relaxed functional \eqref{functionalrelaxed}, then $\bar{\Om}$ coincides (see Lemma \ref{lemma.reduction}) with the set $\{\bar{u}\ne0\}$, where $\bar{u}$ is a solution of the variational problem \eqref{eq.intro3}.

\item The functional in \eqref{eq.intro3} is not coercive on $\mathscr{F}_D$; thus, to obtain lower semicontinuity and compactness, we extend the functional to the functional space
$$\X_D:=\big\{u\ :\ u\vee\eps,\ u\wedge(-\eps)\in\gsbv(\R^d)\ \forall\eps>0,\text{ $u=0$ on $\R^d\setminus D$}\big\},$$
where $\gsbv(\R^d)$ is the space of all functions $u$ such that
$$u\wedge M\text{ and }u\vee(-M)\text{ are in }SBV(\R^d)\qquad\text{for every }M>0.$$
In this extended framework, we prove (see Theorem \ref{thm.main.1}) that, under some assumptions on the integrands $j$ and $g$, the minimization problem
$$\min\big\{\cF(u)\ :\ u\in\X_D\big\}$$
admits a solution $\bar{u}\in\X_D$ which, a priori, may not belong to $\sbv(\R^d)$.

\item The next step is achieved as a consequence of Theorem \ref{boundbelow}, which asserts that
$$|\bar{u}|\ge\alpha>0\qquad\text{where }\bar{u}\ne0,$$
and, as an immediate consequence, that $\bar{u}\in\gsbv(\R^d)$. This solution is then proved to be more regular, i.e., 
$$\bar{u}\in\sbv(\R^d)\quad\text{with $\bar{u}=0$ on $\R^d\setminus D$}$$
as a corollary of Lemma \ref{lemma.boundinf}, which tells us that $\bar{u} \in L^\infty(\R^d)$. In particular (see Theorem \ref{thm.main.2}), the optimal shape $\bar{\Om}$ has finite perimeter.

\item Using an argument concerning the Mumford-Shah-type functional
$$MS(u):=\int_{\R^d} f(x,\nabla u)\,dx+\cH^{d-1}(J_u)$$
in Theorem \ref{thm.main.3} we prove that the optimal shape $\bar{\Om}=\{\bar{u}\ne0\}$ is open (and thus belongs to $\cA(D)$) and there holds
$$\cJ(\bar{\Om})=\inf\big\{\cJ(\Om)\ :\ \Om\in\cA(D),\ \Om\text{ Lipschitz}\big\}.$$
\end{enumerate}

%%%%%%%%%%%%%%%%%%%%%%%%%%%%%%%%%%%%%%%%%%%%%%%%%%%%%%%%%%%%
\section{Preliminaries and main results}\label{sec:setting}

Throughout this paper, we indicate by $|\cdot|$ the $d$-dimensional Lebesgue measure and by $\cH^{d-1}$ the $(d-1)$-dimensional Hausdorff measure. We use $\chi_E$ to indicate the characteristic function of a subset $E$ of $\R^d$, defined by
$$\chi_E(x) = \begin{cases} 1 & \text{if } x\in E,\\ 0 & \text{if } x\notin E. \end{cases}$$
We denote by $Du$ the distributional derivative of a function $u$ and by $B_r(x)$ the open ball centered at $x$ with radius $r$.

%%%%%%%%%%%%%%%%%%%%%%%%%
\subsection{Lower semicontinuity and compactness in $\sbv(\R^d)$} \label{sec:preliminaryresults}

The natural framework of our paper is the $\sbv$ space, the class of {\em special functions of bounded variation} which was introduced by De Giorgi and Ambrosio in \cite{degamb1} and used by De Giorgi, Carriero and Leaci in \cite{dgcl89} to successfully solve the {free discontinuity minimum problems}. Before giving the formal definition, we recall some well-known notions and we refer to \cite{ambrosio2} and to the book \cite{afp00} for all details.

\bd
Let $A\subset\R^d$ be an open set. Given a function $u\in L^1(A)$, the {\bf total variation} of $u$ in $A$ is defined as
$$\int_A|Du|:=\sup\left\{\int_A u\,\divs(\phi)\,dx\ :\ \phi\in C_c^1(A,\R^d),\ \|\phi\|_\infty\le1\right\}.$$
The space $\bv(A)$ of all functions with bounded total variation in $A$ is then
$$\bv(A):=\left\{u\in L^1(A)\ :\ \int_A|Du|<+\infty\right\}.$$
In other words, a function $u\in L^1(A)$ belongs to $\bv(A)$ if and only if its distributional derivative $Du$ belongs to the space of finite vector-valued Radon measures.
\ed

Given a function $u:\R^d\to\R$ the {\em precise representative} of $u$, which belongs to the same class in $L^1(\R^d)$, is defined by setting
$$\mathfrak{u}(x):=\lim_{r\to0}\dashint_{B_r(x)}u(y)\,dy.$$
If $u\in W^{1,p}(\R^d)$, $1<p<\infty$, the limit above exists up to a set of $p$-capacity zero (shortly $C_p$-a.e., where $C_p$ denotes the $p$-capacity), while if $u\in\bv(\R^d)$ it exists up to a set of $\cH^{d-1}$ measure zero. If $u\in\bv(A)$ we denote by $\nabla u$ the part of the measure $Du$ which is absolutely continuous with respect to the Lebesgue measure, and by $D^su$ the corresponding singular part. The asymptotic values of $u$ near a point of discontinuity, are defined by
\[\begin{split}
&u^-(x)=\sup\bigg\{t\in\R\ :\ \lim_{r\to0}\frac{\big|B_r(x)\cap\{u<t\}\big|}{|B_r(x)|}=0\bigg\},\\
&u^+(x)=\inf\bigg\{t\in\R\ :\ \lim_{r\to0}\frac{\big|B_r(x)\cap\{u>t\}\big|}{|B_r(x)|}=0\bigg\},
\end{split}\]
and the {\it jump set} $J_u$ is the set where $u^+\ne u^-$.
\bd 
Let $A\subset\R^d$ be an open set. The class $\sbv(A)$ of special functions with bounded variation on $A$ consists of all $u\in\bv(A)$ such that the total variation of the singular measure $D^su$ is given by
$$|D^su|=|u^+-u^-|\cH^{d-1}\restr J_u.$$
%\left\{ x \in A \: : \: u(x) \neq \frac{u(x^+)+u(x^-)}{2} \right\}$$
%or, in other words, such that the singular part of the measure $Du$ is concentrated on the set of points where the {\em precise representative} of $u$ is not defined.
\ed

As mentioned in the introduction, the space $\sbv(\R^d)$ is the natural one to solve the minimization problem \eqref{eq.intro3}. However, the coercivity in $\sbv(\R^d)$ of the functional $\cF$ in \eqref{eq.intro2} is not guaranteed because pathological behaviors are, in principle, still possible.

\bd
Let $A\subset\R^d$ be an open set. The class $\gsbv(A)$ is defined as the set of functions $u$ such that
$$u\wedge M\text{ and }u\vee(-M)\text{ are in }SBV(A)\qquad\text{for every }M>0.$$
In other words, a function belongs to $\gsbv(A)$ if and only if any truncation that makes the $L^\infty$-norm finite is an element of $\sbv(A)$.
\ed

Following the strategy proposed in \cite{bucgia1} and taking into account that we do not have a natural constraint on the $L^\infty$-norm, we need first to work on the space
$$\X_D:=\big\{ u \ :\ u\vee\eps,\ u\wedge(-\eps)\in\gsbv(\R^d)\ \forall\eps>0,\text{ $u = 0$ on $\R^d\setminus D$}\big\}.$$
In this way we may avoid wild oscillations near $\{u=0\}$ and, at the same time, we may work with the bounded truncations $u\wedge M$, $u\vee(-M)$. Going back to the main framework, we need to introduce the weak convergence in $\sbv(\R^d) \cap L^\infty(\R^d)$.

\bd\label{rmk.weakconvergence}
Let $A\subset\R^d$ be an open set. A sequence $\{u_n\}_{n \in \N}$ converges to $u$ weakly in $\sbv(A) \cap L^\infty(A)$ if the following properties are satisfied: \mbox{}
\begin{enumerate}[itemsep=.3em]
\item $u_n(x) \to u(x)$ at a.e. $x \in A$, 
\item $\nabla u_n \rightharpoonup \nabla u$ weakly in $L^1(A)$,
\item both $\|u_n\|_{\infty}$ and $\cH^{d-1}(J_{u_n})$ are uniformly bounded.
\end{enumerate}
\ed

The lower semicontinuity of functionals defined in $\sbv(A)$ was first studied by Ambrosio in \cite{ambrosio1} and will be crucial in the following.

\bt[Ambrosio]\label{lsc.sbv}
Let $\varphi(x,s,z)$ be a Carath\'eodory function on $\R^d\times\R\times\R^d$ and let $\psi(x,a,b)$ be a continuous function on $\R^d\times\R\times\R$. Suppose that:
\begin{enumerate}[label=(\roman*)]
\item the function $\varphi(x,s,\cdot)$ is convex;
\item there is $r>1$ such that the estimate
$$\varphi(x,s,z)\ge|z|^r$$
holds true for all $z\in\R^d$, all $s\in\R$ and a.e. $x\in\R^d$;
\item the function $\psi$ is nonnegative and satisfies the triangular inequality, namely
$$\psi(x,a,b)\le\psi(x,a,c)+\psi(x,c,b).$$
\end{enumerate}
Then for every open set $A\subset\R^d$ the functional
$$F(u)=\int_A \varphi(x,u,\nabla u)\,dx+\int_{J_u}\psi(x,u^+, u^-)\,d\cH^{d-1}$$
is lower semicontinuous in $\sbv(A)$ with respect to the $L_{\mathrm{loc}}^1(A)$ convergence.
\et

\brmk\label{rmk.triangular}
In \cite{ambrosio1} the result above was actually obtained under weaker assumptions on $\varphi$ and $\psi$. However, in our case, we have
$$\beta_1 \left[ g(u^+) + g(u^-) \right] \le \psi(x,u^+,u^-) \le \beta_2 \left[ g(u^+) + g(u^-) \right],$$
with $\beta_1,\beta_2$ positive, and $g$ positive; hence the triangular inequality is automatically satisfied in view of the assumptions in Section \ref{sec:hp}.
\ermk

\brmk\label{rmk.lsclsc}
In Theorem \ref{lsc.sbv}, the assumption “$\varphi(x,\cdot,\cdot)$ continuous for a.e. $x$” can be weakened. Indeed, using an approximation argument and Beppo Levi's monotone convergence theorem, one can require $\varphi(x,\cdot,\cdot)$ to be lower semicontinuous only.
\ermk

\brmk\label{rmk.lscingsbv}
Note that this result also applies to sequences $u_n$ in $\gsbv(A)$. Indeed, using the same notation, we can write the functional as
$$F(u)=\int_{A\cap\{|u|<M\}}\!\!\!\varphi(x,u,\nabla u)\,dx+\int_{J_u\cap\{|u| < M\}}\!\!\!\psi(x,u^+,u^-)\,d\cH^{d-1}+o(1)=F_1(u)+o(1)$$
which means that $F_1$ is lower semicontinuous in $\sbv(A)$ with respect to the $L_{\mathrm{loc}}^1(A)$ topology. In particular, we have
$$
\liminf_n F_1(u_n) \ge F_1(u)
$$
so that, by taking the limit as $M \to \infty$, we easily deduce that the same is true with $F$ in place of $F_1$.
\ermk

We conclude this section with a few results giving the coercivity of functionals in $\sbv(A)$ and $\gsbv(A)$. The first one was proved by Ambrosio in \cite{ambrosio2}.

\bt[Ambrosio] \label{compactness.sbv}
Let $A$ be an open bounded set in $\R^d$, let $\phi : [0,\infty)\to[0,\infty]$ be a convex non-decreasing function satisfying the condition
$$\lim_{t \to \infty} \frac{\phi(t)}{t}=\infty,$$
and let $\Theta : [0,\infty]\to[0,\infty]$ be a concave non-decreasing function such that
$$\lim_{t \to 0^+} \frac{\Theta(t)}{t}=\infty.$$ 
Let $(u_n)_{n\in\N}\subset\sbv(A)\cap L^\infty(A)$ be a sequence such that $\|u_n\|_{L^\infty}\le C$ for a suitable constant $C$, and
\be \label{conditiona1}
\sup_{n \in \N}\left\{ \int_A \phi(|\nabla u_n|) \, d x + \int_{J_{u_n}} \Theta(|u_n^+ - u_n^-|) \, d \cH^{d-1} \right\} < \infty.
\ee
Then there exists a subsequence converging in measure to a function $u\in\sbv(A)\cap L^\infty(A)$ such that
$$\nabla u_{h_k}\rightharpoonup\nabla u\qquad\hbox{ weakly in }L^1(A).$$
\et

\brmk
\label{cpt.ms}
If we take the concave non-decreasing function
\[
\Theta(t) := \begin{cases} 0 & \text{if $t = 0$}, \\ 1 & \text{if $t > 0$}, \end{cases}
\]
and the convex non-decreasing function $\phi(t) = |t|^q$, then condition \eqref{conditiona1} reads as
\be \label{conditiona2}
\sup_{n \in \N}\left\{ \int_A |\nabla u_n|^q \, dx + \cH^{d-1}(J_{u_n}) \right\} < \infty.
\ee
The functional appearing in \eqref{conditiona2} is usually known as the {\em Mumford-Shah functional} and we will use it again to show that optimal domains $\Om$ are open.
\ermk

\bl \label{lemma.aux1}
Let $(u_n)_{n\in\N}\subset\gsbv(A)$ be a sequence of functions and assume that the following properties hold:
\begin{enumerate}
\item there exist $c_1,c_2>0$ and $q>1$ such that $\|u_n\|_{L^q(A)}\le c_1$ and $\|\nabla u_n\|_{L^q(A)}\le c_2$;
\item there exists $c_3>0$ such that $\int_{J_{u_n}} \left[ |u_n^+|^q + |u_n^-|^q \right] \,d \cH^{d-1} \le c_3$;
\item there exists $c_4>0$ such that $\cH^{d-1}(J_{u_n})\le c_4$.
\end{enumerate}
Then $u_n$ converges, up to subsequences, strongly in $L^q(A)$ to a function $u\in\gsbv(A)$ that satisfies the properties $(1)$, $(2)$ and $(3)$ with the same constants above.
\el

\begin{proof}
Using $(1)$ we immediately deduce that $u_n$ converges up to subsequences to some $u$ weakly in $L^q(A)$ which satisfies the inequality
$$
\| u \|_{L^q(A)} \le \liminf_{n \to \infty} \|u_n\|_{L^q(A)} \le c_1.
$$
Therefore, using a standard result in functional analysis, to prove that $u_n$ converges strongly to $u$ in $L^q(A)$ it is enough to prove the convergence of the norms:
$$
\int_A |u_n|^q \, dx \xrightarrow{n \to \infty} \int_A |u|^q \, dx.
$$
Now notice that the weak derivative of $u_n$ can be written as the sum of the absolutely continuous part and the singular one; namely, we have
$$
D u_n = \nabla u_n \cdot dx + \left(u_n^+ - u_n^-\right) \nu_n \cdot d\cH^{d-1} \restr J_{u_n},
$$
where $\nu_n$ is the normal unit vector to $J_{u_n}$. Let $w_n := |u_n|^q$. A simple computation shows that for its weak derivative we have
$$
|D w_n| = q |u_n|^{q-1} |\nabla u_n| \cdot dx + \left| |u_n^+|^q - |u_n^-|^q \right| \cdot d\cH^{d-1} \restr J_{u_n}
$$
so that its total variation as a measure is given by
\[ \begin{aligned}
\int_A |D w_n| & = q \int_A |u_n|^{q-1} |\nabla u_n| \, dx + \int_{J_{u_n}} \left| |u_n^+|^q - |u_n^-|^q \right| \, d\cH^{d-1}
\\ & \le q \int_A \left[ |u_n|^q + |\nabla u_n|^q \right] \, dx + \int_{J_{u_n}} \left[ |u_n^+|^q + |u_n^-|^q \right] \,d\cH^{d-1}
\\ & \le q \left[ c_1^q + c_2^q \right] + c_3 =: C_q.
\end{aligned} \]
This means that $w_n$ converges to some $w$ in $\bv(A)$ and, in particular, it converges strongly in $L^1(A)$ to $w$. Finally, we notice that
\[\begin{aligned}
\int_A |D u_n| & = \int_A |\nabla u_n| \, dx + \int_{J_{u_n}} \left| u_n^+ - u_n^- \right| \, d\cH^{d-1} 
\\ & \le \int_A \left(1+|\nabla u_n|^q\right) \, dx + \int_{J_{u_n}} \left(1+|u_n^+|^q + |u_n^-|^q\right) \, d\cH^{d-1}
\\ & \le |A| + \cH^{d-1}(J_{u_n}) + \int_{A} |\nabla u_n|^q \, dx + \int_{J_{u_n}} \left[ |u_n^+|^q + |u_n^-|^q \right] \,d \cH^{d-1}
\\ & \le |A| + c_4 + c_2^q + c_3 =: \tilde{C}_{q,|A|},
\end{aligned}\]
which means that $u_n$ converges strongly to $u$ in $L^1(A)$ and weakly in $L^q(A)$. Therefore, using the fact that
$$
\int_A |u_n|^q \, dx = \int_A |w_n| \, dx \xrightarrow{n \to \infty} \int_A |w| \, dx,
$$
we easily deduce that $|u|^q=w$, concluding the proof of the result.
\end{proof}

%%%%%%%%%%%%%%%%%%%%%%%%%
\subsection{Assumptions on $j$ and $g$}
\label{sec:hp}

We will now briefly describe the assumptions on $j$ and $g$ that are sufficient to obtain our main results. The model integrands are respectively
$$
j(x,u,\nabla u) = |\nabla u|^p - f(x)u + 1,
$$
where $f$ is a function that belongs to a Lebesgue space, and
$$
g(x,u) = \beta |u|^q
$$
for some $\beta > 0$ so that for $p = q = 2$ we obtain the Robin-type problem \eqref{robinpde}. In particular, we require that $j$ satisfies some (or all) of the following properties: \mbox{}
\begin{enumerate}[label=(j\arabic*)]
\item $x\mapsto j(x,s,\xi)$ is measurable for all $(s,\xi) \in \R \times \R^d$, $(s,\xi)\mapsto j(x,s,\xi)$ is lower semicontinuous for almost every $x \in\R^d$ and $\xi \mapsto j(x,s,\xi)$ is convex for all $s \in \R$ and almost every $x \in \R^d$.
\item The function $j(x,0,0)$ belongs to $L^1(\R^d)$ and for a.e. $x \in \R^d$ we have $j(x,0,0)\ge0$.
\item There are $p>1$ and $L>0$ such that
\be\label{eq.f.1}
j(x,s,\xi)-j(x,s,0)\ge L |\xi|^p,
\ee
and functions $f\in L^\infty(\R^d)$, $a\in L^1(\R^d)$, and $1<q\le p$ for which
\be\label{eq.f.2}
j(x,s,0) \ge-f(x)|s|^q - a(x),
\ee
holds for a.e. $x\in\R^d$. Furthermore, the function $f$ satisfies the estimate
\be \label{eq.f.2.2}
\|f\|_{L^\infty(\R^d)} \le \frac{L}{2q} \lambda_{\beta_1/L,q}(B_D),
\ee
where $\beta_1$ is introduced in assumption (g3), $B_D$ is any ball of volume $|D|$ and for $\alpha>1$ and $b>0$ we define
$$
\lambda_{b,\alpha}(B) := \min_{u \in W^{1,\alpha}(B) \setminus\{0\}} \frac{ \int_B |\nabla u|^\alpha \, dx + b \int_{\partial B} |u|^\alpha \, d\cH^{d-1}}{ \int_ B |u|^\alpha \, dx}.
$$
\item There exists $\eps_0>0$ such that for a.e. $x \in \R^d$ there holds
$$j(x,s,0) - j(x,t,0) \ge 0 \quad \text{for all $s < t < \eps_0$}.$$
Furthermore, if $q$ is the exponent given in (j3), then
\be\label{eq.f.3}
p\ge q>\max \left\{1,\frac{p}{2p-1}\left[p+\frac{(p-1)^2}{(d-1)p}\,\frac{2}{1+\sqrt{1+\frac{4(p-1)}{(d-1)p}}}\right] \right\}.
\ee
\item The lower bound \eqref{eq.f.1} is an equality, namely
$$j(x,s,\xi) - j(x,s,0) = L |\xi|^p.$$
Furthermore, there are $M_0 > 0$ large and a positive constant $C_j$ such that
\be \label{eq.f.5}
j(x,s,0) - j(x,t,0) \ge - C_j |s|^q \quad \text{for all $s \ge t > M_0$}.
\ee
\end{enumerate}

In a similar fashion, we require that $g$ satisfies some (or all) of the following properties:
\begin{enumerate}[label=(g\arabic*)]
\item $x \mapsto g(x,s)$ is measurable for all $s\in \R$ and $s \mapsto g(x,s)$ is lower semicontinuous for a.e. $x\in \R^d$.
\item For every $x\in \R^d$ we have $g(x,0)=0$.
\item There exists a continuous positive function $\beta_1 : \R^d \to (0,\infty)$ such that
\be\label{eq.g.1}
g(x,s)\ge\beta_1(x)|s|^q
\ee
for a.e. $x\in\R^d$ and all $s\in\R$, where $q$ is as in (j3). Furthermore, $\beta_1:=\min_{x\in\R^d}\beta_1(x)>0$.
\item There exists a continuous positive function $\beta_2 : \R^d \to [0,\infty)$ such that
$$\beta_2(x) |s|^q \ge g(x,s) \ge \beta_1(x) |s|^q$$
for a.e. $x\in\R^d$ and all $s\in\R$, where $q$ is as in (j3). Furthermore, $\beta_2:=\max_{x\in\R^d}\beta_2(x)>\beta_1$.
\end{enumerate}

%%%%%%%%%%%%%%%%%%%%%%%%%
\subsection{Main results}\label{sec:results}

The first step, the existence of a solution $\bar{u}$ for the minimization problem \eqref{eq.intro3} with $\X_D$ in place of $\mathscr{F}_D$, is obtained under very mild assumptions:

\bt \label{thm.main.1}
Suppose that $j$ and $g$ satisfy (j1)--(j3) and (g1)--(g3) respectively. Then the minimization problem
$$\min\big\{\cF(u)\ :\ u\in\X_D\big\}$$
admits a solution $\bar{u} \in \X_D$. Furthermore, the relaxed shape optimization problem associated to the functional \eqref{functionalrelaxed} is solved by $\bar{\Om} := \{ \bar{u} \neq 0 \}$ on the class
$$\left\{\Om\subset D\ :\ \text{$\Om$ measurable}\right\}\supset\cA(D).$$
\et

The proof of the second assertion is an easy consequence of Lemma \ref{lemma.reduction}. However, the solution $\bar{u}$ only belongs to $\X_D$ so we need an additional effort to show that $\bar{u}\in\mathscr{F}_D$ or, in other words, that $\bar{u}$ has finite $L^\infty$-norm (see Lemma \ref{lemma.boundinf}) and also that it is bounded from below by a positive constant (see Theorem \ref{boundbelow}).

\bt \label{thm.main.2}
Let $j$ and $g$ be as in Theorem \ref{thm.main.1}. If we further assume (j4) and (g4), then $\bar{u}$ solves \eqref{eq.intro3} and $\bar{u}\ge0$. In other words, $\bar{u} \in \mathscr{F}_D$ and, consequently, $\per(\bar{\Om})<\infty$, so that the optimal shape $\bar{\Om}=\{\bar{u}>0\}$ belongs to the class
$$
\left\{\Om\subset D\ :\ \text{$\partial\Om$ is $\cH^{d-1}$-rectifiable with $\cH^{d-1}(\partial \Om) < \infty$} \right\} \supset \cA(D).
$$
\et

Finally, following the strategy in \cite{bugi16}, we prove that $\bar{\Om}$ is open and therefore it minimizes $\cJ$ on the class $\cA(D)$ which we recall to be defined as
$$\cA(D) := \left\{\Om\subset D\ :\ \text{$\Om$ open, $\partial \Om$ is $\cH^{d-1}$-rectifiable with $\cH^{d-1}(\partial \Om)<\infty$}\right\}.$$
The key point here is the essential closedness of the jump set associated to the optimal function $\bar{u}$ and, obviously, having $\|\bar{u}\|_\infty$ finite plays a fundamental role here.

\bt \label{thm.main.3}
Let $j$ and $g$ be as in Theorem \ref{thm.main.1}. If we further assume (j4), (g4), and (j5), then the optimal set $\bar{\Om}=\{\bar{u}>0\}$ is open. Moreover, it turns out that
$$\cJ(\bar{\Om})=\inf\big\{\cJ(\Om)\ :\ \Om \in \cA(D),\text{ $\Om$ Lipschitz}\big\}.$$
\et

%%%%%%%%%%%%%%%%%%%%%%%%%%%%%%%%%%%%%%%%%%%%%%%%%%%%%%%%%%%%
\section{Proof of the main results}

The goal of this section is to give a proof of the main results and, at the same time, introduce all the technical tools we need to carry out our analysis. The main references here are \cite{butshr} for the existence of optimal solutions in the case of Dirichlet boundary conditions, and \cite{bucgia1,bugi16} for the other properties and a Poincar\'e-type inequality.

%%%%%%%%%%%%%%%%%%%%%%%%%
\subsection{Reduction to the auxiliary problem}

In order to address the minimization of the functional \eqref{functionalrelaxed}, we now consider the already mentioned free discontinuity functional
$$
\cF(u):= \int_{\{u\neq 0\}} j(x,u,\nabla u) \, dx + \int_{J_u} \left[ g(x,u^+) + g(x,u^-) \right] \, d\cH^{d-1}.
$$
Our goal is to show that the minimization problem
$$\min\big\{\cF(u)\ :\ u\in\mathscr{F}_D\big\}$$
is, under mild assumptions on $j$ and $g$, equivalent to the minimization of $\cJ$ among all $\Om$ in $\cA(D)$ in such a way that the following holds:
$$\bar{u}=\mathrm{argmin}_{u\in\mathscr{F}_D}\cF(u)\ \implies\ \min\big\{\cJ(\Om)\ :\ \Om\in\cA(D)\big\} = \cJ( \{ \bar{u} \neq 0\} ).
$$
The reason we introduce another functional is that it is much easier to deal with the minimization with respect to functions rather than sets.

\bl \label{lemma.reduction}
Assume that $j$ and $g$ satisfy (j2) and (g2). If $\bar{u}$ is a minimizer of the auxiliary functional given in \eqref{eq.intro2}, which we recall to be
$$
\cF(u):= \int_{\{u \neq 0\}} j(x,u,\nabla u) \,dx + \int_{J_u} \left[ g(x,u^+) + g(x,u^-) \right] \, d\cH^{d-1},
$$
then the shape optimization problem
$$\min\big\{\cJ(\Om)\ :\ \Om\in\cA(D)\big\}$$
admits a solution of the form $\bar{\Om}=\{\bar{u}\ne0\}$.
\el

\begin{proof}
Let $u \in W^{1,p}(\Om)$ and extend it to zero outside of $\Om$. It is easy to verify that
\[ \begin{aligned}
\int_\Om j(x,u,\nabla u) \, dx & = \int_{\R^d} j(x,u,\nabla u)\, dx - \int_{\R^d \setminus \Om} j(x,0,0) \, dx
\\ & = \int_{\R^d} j(x,u,\nabla u) \, dx + \int_\Om j(x,0,0) \, dx - \int_{\R^d} j(x,0,0) \, dx
\\ & = \int_{\{u\neq 0\}} j(x,u,\nabla u) \, dx + \int_\Om j(x,0,0) \, dx
\\ & \ge \int_{\R^d} j(x,u,\nabla u) \chi_{\{u\neq 0\}} \, dx
\end{aligned} \]
since $j(x,0,0) \ge 0$ by assumption (j2). In a similar fashion, one has
\[\begin{aligned}
\int_{\partial \Om} g(x,u) \, d\cH^{d-1} & = \int_{J_u} \left[g(x,u^+)+g(x,0)\right] \, d\cH^{d-1} - \int_{\partial \Om} g(x,0) \, d\cH^{d-1}
\\ & = \int_{J_u} g(x,u^+) \, d\cH^{d-1}
\end{aligned} \]
as a consequence of assumption (g2). If $u$ is not regular enough, we obviously replace the latter integral as in the formula for $\cF$, namely
$$
\int_{J_u} \left[ g(x,u^+) + g(x,u^-) \right] \, d\cH^{d-1},
$$
since $u^-$ might not be equal to zero. Now, if $\bar{u}$ is a minimizer for $\cF$, setting $\bar{\Om}:=\{\bar{u}\ne0\}$, the inequalities above give for every $\Om\in\cA(D)$
$$\cJ(\Om)\ge\cF(\bar{u})\ge\cJ(\bar{\Om}),$$
which concludes the proof.
%Now let $\bar{u}$ be a minimizer for $\cF$ and notice that for any $\Om \in \cA(D)$ there is $u_\Om$ such that
%$$
%\cJ(\Om) = \cJ(\Om,u_\Om).
%$$
%Using the estimates obtained above, together with the minimality of $\bar{u}$, we get
%$$
%\cJ(\Om) \ge \cF(u_\Om) \ge \cF(\bar{u}) \ge \cJ(\bar{\Om}),
%$$
%hence
%$$\min\big\{\cJ(\Om)\ :\ \Om\in\cA(D)\big\}=\cJ( \bar{\Om} ) \qquad \text{with $\bar{\Om} := \{ \bar{u}\neq 0\}$,}$$
%which concludes the proof.
\end{proof}

%%%%%%%%%%%%%%%%%%%%%%%%%
\subsection{Proof of Theorem \ref{thm.main.1}: existence of a minimizer}

We now prove that the auxiliary functional $\cF$ has a minimizer $\bar{u}$ in the class $\X_D$, namely that the problem
$$\min\big\{\cF(u)\ :\ u\in\X_D\big\}$$
admits a solution $\bar{u}$. However, both the $\sbv$-regularity and the $L^\infty$-regularity of $\bar{u}$ are unclear at this point and will be dealt with in the next section. To prove the existence in the class $\X_D$, the first ingredient is a Poincar\'e-type inequality that was proved in \cite{bucgia1} with $p = 2$ and $\alpha \in [1,2]$.

\bl\label{lemma.poincare}
Let $p>1$, $\alpha\in[1,p]$ and $b,m>0$. For every $u\in\sbv(\R^d)$ that satisfies the inequality $|\{u\ne0\}|\le m$, there holds
\be\label{eq.poincare}
\int_{\R^d}|\nabla u|^p\,dx+b\int_{J_u}\left[|u^+|^p+|u^-|^p\right]\,d\cH^{d-1}
\ge\lambda_{b,\alpha}(B)\left(\int_{\R^d}|u|^\alpha\,dx\right)^{p/\alpha},
\ee
where $B$ is a ball of measure $m$ and $\lambda_{b,\alpha}(B)$ the first Robin eigenvalue. Moreover, the equality holds if and only if $u$ is the first eigenfunction associated to
$$\lambda_{b,\alpha}(B):=\min\left\{\frac{\int_B |\nabla u|^p \, dx + b \int_{\partial B} |u|^p \, d\cH^{d-1}}{\left(\int_B | u|^\alpha \, dx\right)^{p/\alpha}}\ :\ u\in W^{1,p}(B)\setminus\{0\}\right\} .
$$
\el

\brmk
This inequality can be extended to any value of $p>1$ because in \cite{bucgia1} one can work with $\sbv^{1/p}(\R^d)$ in place of $\sbv^{1/2}(\R^d)$ making minimal changes.
\ermk

We are now ready to prove the existence of a solution in the class $\X_D$ using the lower semicontinuity and compactness results obtained in Section \ref{sec:preliminaryresults}.

\begin{proof}[Proof of Theorem \ref{thm.main.1}] We divide the proof into two steps, but first we recall that the notion of convergence on $\X_D$ is the following one:
$$
u_n \xrightarrow{\X_D} u \iff \begin{cases} u_n \vee \eps \xrightarrow{\gsbv(\R^d)} u \vee \eps \\ u_n \wedge (-\eps) \xrightarrow{\gsbv(\R^d)} u \wedge (-\eps) \end{cases} \quad \text{for all $\eps > 0$}.
$$

\proofpart{1}{Coercivity of the functional $\cF$}

Let $(u_n)_{n \in\N}$ be a sequence in $\X_D$ such that $\cF(u_n) \le C$ for a suitable positive constant $C$. The function $u_n$ is not in $\sbv(\R^d)$ so we cannot apply the Poincar\'e-type inequality mentioned above directly. However, by definition, the truncated function defined as
$$v_{n,\eps} := (u_n - \eps) \vee 0 + (u_n + \eps)\wedge0$$
belongs to $\gsbv(\R^d)$ for all $\eps>0$; to replace $u_n$ by $v_{n,\eps}$ we first notice that combining \eqref{eq.f.1} and \eqref{eq.g.1} with $\cF(u)\le C$ leads to a more precise estimate, namely
$$L\int_{\R^d} |\nabla u_n|^p \, dx + \beta_1 \int_{J_{u_n}} \left[ |u_n^+|^q + |u_n^-|^q \right] \, d \cH^{d-1} \le C' - \int_{\{u_n \neq 0\}} j(x,u_n,0) \, dx,$$
where $\beta_1 := \min_{x \in \R^d} \beta_1(x) > 0$. Using assumption \eqref{eq.f.2} on the right-hand side, we immediately deduce that
\be \label{eq.coerc22}
L \int_{\R^d} |\nabla u_n|^p \, dx + \beta_1 \int_{J_{u_n}} \left[ |u_n^+|^q + |u_n^-|^q \right] \, d \cH^{d-1} \le C_1 + \int_{\R^d}f(x) |u_n|^q \, dx,
\ee
where
$$C_1:= C' + \int_{\R^d} a(x) \, dx.$$
Now $\nabla v_{n,\eps}$ coincides with $\nabla u_n$ in $\{|u_n|\ge\eps\}$ and is equal to zero otherwise so that the following inequality is satisfied:
$$
\int_{\R^d} |\nabla u_n|^p \, dx \ge \int_{\R^d} |\nabla v_{n,\eps}|^p \, dx.
$$
Furthermore, it is easy to notice that
$$
\int_{J_{u_n}} \left[ |u_n^+|^q + |u_n^-|^q \right] \, d \cH^{d-1}\ge\int_{J_{v_{n,\eps}}} \left[ |v_{n,\eps}^+|^q + |v_{n,\eps}^-|^q \right] \, d \cH^{d-1}
$$
since $J_{v_{n,\eps}}\subset J_{u_n}$ for all $\eps>0$ and $|u_n| \geq |v_{n,\eps}|$ by construction. By assumption \eqref{eq.f.2.2} $f$ is in $L^\infty$ so that, using the fact that $|v_{n,\eps}\pm \eps| \ge |u_n|$ when $u_n$ is respectively positive or negative, we obtain
\[\begin{aligned}
\int_{\R^d} f(x)|u_n|^q\,dx
&\le\|f\|_{L^\infty(\R^d)}\int_{\R^d}|u_n|^q\,dx\\
&\le\|f\|_{L^\infty(\R^d)}\left[\int_{\{u_n\ge0\}} |v_{n,\eps}+\eps|^q\,dx+\int_{\{u_n\le0\}} |v_{n,\eps}-\eps|^q\,dx\right]\\
&\le q\|f\|_{L^\infty(\R^d)}\left[\int_{\R^d} |v_{n,\eps}|^q\,dx+\eps^q |D|\right].
\end{aligned}\]
Therefore, from \eqref{eq.coerc22} we obtain a similar estimate for $v_{n,\eps}$ that reads as
\be\label{eq.a.14}
\begin{split}
L\int_{\R^d} |\nabla v_{n,\eps}|^p\,dx
&+\beta_1\int_{J_{v_{n,\eps}}}\left[|v_{n,\eps}^+|^q+|v_{n,\eps}^-|^q \right]\,d\cH^{d-1}\\
&\le C_1+q\|f\|_{L^\infty(\R^d)}\left[\int_{\R^d} |v_{n,\eps}|^q\,dx+\eps^q|D|\right].
\end{split}
\ee
The assumption $p\ge q$ comes into play here because of the Poincar\'e-type inequality and of the fact that $1+|\nabla u|^p \ge |\nabla u|^q$, which gives
\be \label{eq.ee2}
L\int_{\R^d} |\nabla u|^p\,dx+L|D|\ge L\int_{\R^d} |\nabla u|^q\,dx.
\ee
Now plug \eqref{eq.ee2} into \eqref{eq.a.14} and let $C_2 := C_1 + L|D|$ to obtain
\[\begin{aligned}
\int_{\R^d} |\nabla v_{n,\eps}|^q \, dx + \frac{\beta_1}{L} & \int_{J_{v_{n,\eps}}} \left[ |v_{n,\eps}^+|^q + |v_{n,\eps}^-|^q \right] \, d \cH^{d-1} 
\\ & \le \frac{C_2}{L} + \frac{q \|f\|_{L^\infty(\R^d)}}{L} \left[ \int_{\R^d} |v_{n,\eps}|^q \, dx + \eps^q |D| \right].
\end{aligned}\]
The Poincar\'e-type inequality \eqref{eq.poincare} - which holds for $\gsbv$ functions as well - allows us to bound the left-hand side from below so that the estimate above leads to
$$
\| v_{n,\eps} \|_{L^q(\R^d)}^q \left[\lambda_{\frac{\beta_1}{L},q}(B) - \frac{q \|f\|_{L^\infty(\R^d)}}{L}\right] \leq C_L'' + \frac{q \|f\|_{L^\infty(\R^d)}}{L} \eps^q |D|
$$
which, taking into account that $2q\|f\|_{L^\infty(\R^d)}\le L\lambda_{\beta_1/L,q}(B)$ by assumption \eqref{eq.f.2.2}, gives
$$
\| v_{n,\eps} \|_{L^q(\R^d)} \leq \tilde{C}_\eps := \tilde{C}_1 + \tilde{C}_2 \eps.
$$
This means that $v_{n,\eps}$ is uniformly bounded in $L^q(\R^d)$ as the constant on the right-hand side is independent of $n$. It follows from \eqref{eq.coerc22} that
$$
\int_{J_{u_n}} \left[ |u_n^+|^q + |u_n^-|^q \right] \, d \cH^{d-1} \leq \frac{\tilde{C}_1 + \tilde{C}_2 \eps}{\beta_1}.
$$
If we now denote by $J_{u_n}^{\geq \eps}$ the set of all jumps of $u_n$ which are bigger than (or equal to) $\eps$, namely the set
$$
J_{u_n}^{\geq \eps} = J_{u_n} \cap\left( \{ u_n^+ \leq - \eps \} \cup \{ u_n^- \geq \eps\} \right),
$$
then the estimate above can be rewritten as
$$
\eps^q \cH^{d-1} \left(J_{u_n}^{\geq \eps}\right) \leq \frac{\tilde{C}_1 + \tilde{C}_2 \eps}{2 \beta_1}
$$
since $|u_n^+|^q + |u_n^-|^q \geq 2 \eps^q$ by definition. Therefore, for $\eps$ small enough we have
$$
\cH^{d-1}\left(J_{u_n}^{\geq \eps}\right) \leq \frac{\tilde{C}_1}{2\beta_1} \eps^{-q},
$$
and, using the fact that $J_{v_{n,\eps}} = J_{u_n}^{\ge\eps}$ leads to
$$
\cH^{d-1}\left(J_{v_{n,\eps}}\right)\le\frac{\tilde{C}_1}{2\beta_1}\eps^{-q}.
$$
The right-hand side is uniformly bounded with respect to $n$, and hence we can apply the compactness Theorem \ref{compactness.sbv} (more precisely, Remark \ref{cpt.ms}) to infer that
$$
v_{n_k,\eps} \xrightarrow{k\to \infty} u_\eps \in \gsbv(\R^d)
$$
with respect to $L^1(\R^d)$ convergence. We do not achieve $\sbv$-regularity because Ambrosio's theorem requires a uniform bound on the $L^\infty$-norm. Nevertheless, a diagonal argument and pointwise convergence show that there exists some $\bar{u}\in\X_D$ such that $u_\eps=\bar{u}\vee\eps$.

\proofpart{2}{Lower semicontinuity of the functional $\cF$}
Let $(u_n)_{n \in \N} \subset \X_D$ be a sequence converging to $u$ and let $v_{n,\eps}$ be defined as in the first step. With no loss of generality we may assume $\cF(u_n)\le C$ for a suitable positive constant $C$. We first rewrite $\cF(u)=\cF_1(u)+\cF_2(u)$, where
\[\begin{aligned}
&\cF_1(u):=\int_{\R^d} \left(j(x,u,\nabla u)-j(x,u,0)\right)\,dx+\int_{J_{u_n}}\left[g(x,u^+)+g(x,u^-)\right]\,d\cH^{d-1}\\
&\cF_2(u):=\int_{\R^d} j(x,u,0)\,dx.
\end{aligned}\]
We now apply Ambrosio's lower semicontinuity Theorem \ref{lsc.sbv} to infer that the $\cF_1$ is lower semicontinuous in $\gsbv(\R^d)$, which means that
$$
\liminf_{n \to \infty} \cF_1(v_{n,\eps}) \geq \cF_1(v_\eps),
$$
where $v_\eps$ is the limit of $v_{n,\eps}$ in $\gsbv(\R^d)$ for $\eps > 0$ fixed. It remains to prove that $\cF_2$ is lower semicontinuous or, in other words, that
$$
\int_{\R^d} j(x,v_\eps,0)\,dx\le\liminf_{n\to\infty}\int_{\R^d}j(x,v_{n,\eps},0)\,dx.
$$
We first apply Fatou's lemma taking into account the estimate \eqref{eq.f.2} and deduce that
\[\begin{aligned}
\int_{\R^d} j(x,v_\eps,0)\,dx&+\int_{\R^d}-f(x)|v_\eps|^q\,dx\\
&\le\liminf_{n\to\infty}\int_{\R^d}j(x,v_{n,\eps},0)\,dx+\liminf_{n\to\infty}\int_{\R^d}-f(x)|v_{n,\eps}|^q,
\end{aligned}\]
and the conclusion follows if we are able to prove that $v_{n,\eps} \to v_\eps$ strongly in $L^q$. Using the same argument of the coercivity step, and the fact that $\cF(u_n)\le C$, we apply Lemma \ref{lemma.aux1} to $v_{n,\eps}$ to deduce the strong convergence in $L^q(\R^d)$ to $v_\eps$, concluding the proof.
\end{proof}

\brmk
The function $\bar{u}$ obtained so far is not, a priori, in $\sbv(\R^d)$ and not even in $\gsbv(\R^d)$, because the estimate
$$\cH^{d-1}(J_{u_\eps})\lesssim\eps^{-q}$$
does not provide any upper bound to $\cH^{d-1}(J_{\bar{u}})$. We will prove in next section that the $L^\infty$-norm of $u$ is bounded so that
$$\bar{u}_\eps:=(\bar{u}-\eps)\vee0+(\bar{u}+\eps)\wedge0\in\sbv(\R^d).$$
Next, under additional assumptions for which $\bar{u}$ will be a nonnegative function, we will also show that
$$\bar{u}(x)\ge\alpha>0\qquad\text{for a.e. }x\in\{x\ :\ \bar{u}(x)\ne0\},$$
which gives $\bar{u}\in\sbv(\R^d)$, hence $\bar{u}\in\mathscr{F}_D$.
\ermk

We conclude this section by giving a sufficient condition on the function $j$ such that the solution $\bar{u}$ can be taken positive. This is important in the proof of Theorem \ref{thm.main.2}.

\begin{corollary} \label{cor.pos}
Under the same assumptions of Theorem \ref{thm.main.1}, if the integrand $j$ also satisfies (j4), then the solution $\bar{u}$ is nonnegative, and therefore
$$
\bar{\Om} = \{ \bar{u} \neq 0 \} = \{ \bar{u} > 0 \}.
$$
\end{corollary}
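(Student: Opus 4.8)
The plan is to argue that replacing a minimizer by its positive part does not increase $\cF$, so that one may take $\bar u\ge0$. Concretely, I would start from a minimizer $\bar u\in\X_D$ of $\cF$ over $\X_D$ provided by Theorem \ref{thm.main.1} and set $v:=\bar u\vee0$. One checks quickly that $v$ is an admissible competitor: it vanishes on $\R^d\setminus D$ and $v\vee\eps=\bar u\vee\eps\in\gsbv(\R^d)$ for every $\eps>0$, so $v\in\X_D$. The whole statement then reduces to the single inequality $\cF(v)\le\cF(\bar u)$: once this is known, $v$ is again a minimizer of $\cF$ over $\X_D$, all assertions of Theorem \ref{thm.main.1} hold for it, and $\bar\Om=\{v\ne0\}=\{v>0\}$ since $v\ge0$; relabelling $v$ as $\bar u$ gives the corollary. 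All the comparisons below should first be made on the bounded truncations, using $(\bar u\vee0)\wedge M=(\bar u\wedge M)\vee0$, and then passed to the limit $M\to\infty$ as in Remark \ref{rmk.lscingsbv}.

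For the bulk term I would use that $v=\bar u$, $\nabla v=\nabla\bar u$ on $\{\bar u\ge0\}$ and $v\equiv0$ on $\{\bar u\le0\}$, so that
$$\int_{\{v\ne0\}}j(x,v,\nabla v)\,dx=\int_{\{\bar u\ne0\}}j(x,\bar u,\nabla\bar u)\,dx-\int_{\{\bar u<0\}}j(x,\bar u,\nabla\bar u)\,dx ,$$
and it suffices to see that the last integral is nonnegative. This is exactly where (j4) enters: for a.e. $x\in\{\bar u<0\}$, by \eqref{eq.f.1}, by the monotonicity in the first part of (j4) applied with $s=\bar u(x)<0=t<\eps_0$, and by (j2),
$$j(x,\bar u,\nabla\bar u)\ \ge\ j(x,\bar u,0)+L|\nabla\bar u|^p\ \ge\ j(x,0,0)\ \ge\ 0 .$$
Hence the bulk part of $\cF(v)$ does not exceed that of $\cF(\bar u)$.

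For the boundary term I would use that $s\mapsto s\vee0$ is continuous, nondecreasing and $1$-Lipschitz, so that $J_v\subset J_{\bar u}$ and $v^\pm=\bar u^\pm\vee0$ on $J_v$. By (g2) and the positivity of $g$ coming from (g3), at each $x\in J_v$ one has $g(x,v^+)+g(x,v^-)\le g(x,\bar u^+)+g(x,\bar u^-)$: an equality when $0\le\bar u^-<\bar u^+$, and the bound $g(x,\bar u^+)\le g(x,\bar u^+)+g(x,\bar u^-)$ when $\bar u^-<0<\bar u^+$; while on $J_{\bar u}\setminus J_v$ (where $\bar u^+\le0$, so both truncated traces vanish) one simply discards the nonnegative quantity $g(x,\bar u^+)+g(x,\bar u^-)$. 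Integrating gives that the boundary part of $\cF(v)$ does not exceed that of $\cF(\bar u)$, and adding the two estimates yields $\cF(v)\le\cF(\bar u)$, as wanted.

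I expect no serious obstacle here: the argument is essentially a one-line truncation once the competitor $v=\bar u\vee0$ is identified. The only points needing care are the bookkeeping that $v$ lies in $\X_D$ and that the bulk and jump quantities are computed correctly through the $\gsbv$ truncations; the conceptual content is simply that (j4) forces $j(x,\cdot,0)$ to be nonnegative below $\eps_0$, so the negative part of $\bar u$ carries a nonnegative bulk cost that the truncation removes. Note that neither the quantitative inequality in (j4) nor assumption (g4) is used.
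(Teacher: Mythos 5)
Your proof is correct and follows essentially the same route as the paper: the paper's proof also compares $\bar u$ with its truncation $\bar u_E=\bar u\vee0$ (i.e.\ $\bar u$ set to zero on $E=\{\bar u<0\}$) and uses \eqref{eq.f.1}, the monotonicity of $j(x,\cdot,0)$ from (j4) applied with $s=\bar u<0=t<\eps_0$, and $j(x,0,0)\ge0$ to conclude $\cF(\bar u)\ge\cF(\bar u\vee0)$. Your bookkeeping of the jump term (via $J_v\subset J_{\bar u}$ and $v^{\pm}=\bar u^{\pm}\vee0$) and of the membership $v\in\X_D$ is in fact more explicit than the paper's, which leaves those steps implicit.
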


\begin{proof}
Let $E := \{ \bar{u} < 0 \} \subset \bar{\Om}$ and define $\bar{u}_E$ as the function that coincides with $\bar{u}$ in $\R^d\setminus E$ and is equal to zero in $E$. Then (taking $L = 1$ for simplicity) we have
\[ \begin{aligned}
\cF(\bar{u}) - \cF(\bar{u}_E) & \ge \int_E \left[ j(x,\bar{u},\nabla \bar{u})-j(x,0,0) \right] \, dx 
\\ & \ge \int_E \left[|\nabla \bar{u}|^p + j(x,\bar{u},0) - j(x,0,0) \right] \, dx,
\end{aligned} \]
and the latter is nonnegative because $j(x,\bar{u},0) - j(x,0,0) \geq 0$ as $j(x,s,0)$ is nonincreasing with respect to $s$ and $\bar{u}<0$ in $E$.
\end{proof}

%%%%%%%%%%%%%%%%%%%%%%%%%%%%%%%%%%%%%%%%%%%%%%%%%%%%%%%%%%%%
\section{Proof of Theorem \ref{thm.main.2}: $\bar{\Om}$ has finite perimeter}

So far we have been able to prove that a solution to the shape optimization problem exists and has the form $\bar{\Om}=\{\bar{u}\ne0\}$. By the implication
$$\bar{u} \in L^\infty(\R^d) \implies \text{$\bar{u} \wedge (-\eps), \, \bar{u} \vee \eps \in \sbv(\R^d)$ for all $\eps > 0$},$$
our goal becomes to prove that $\bar{u}\in L^\infty(\R^d)$ and, under additional assumptions on the integrands $j$ and $g$, $\bar{u}$ is $\sbv$-regular because this gives (see Section \ref{sec.sbvregularit}) the property $\per(\bar{\Om})<\infty$. This is achieved by finding $\alpha > 0$ such that
$\bar{u}\ge\alpha$ almost everywhere in $\{\bar{u}\ne0\} $ in Theorem \ref{boundbelow}. Consequently, we deduce $\bar{u}\in\mathscr{F}_D$ and $\bar{\Om}$ has finite perimeter.

%%%%%%%%%%%%%%%%%%%%%%%%%
\subsection{Minimizers of $\cF$ are $L^\infty$-regular} 

To prove that $\bar{\Om}$ has finite perimeter, we first need to show that any minimizer $\bar{u}$ of the functional \eqref{eq.intro2} in $\mathscr{F}_D$ is bounded from above.

\bl \label{lemma.boundinf}
Let $j$, $g$ and $\bar{u}$ be as in Theorem \ref{thm.main.1} and further assume that $j$ satisfies the assumption (j4). Then there exists $M > 0$ such that $\|\bar{u}\|_\infty\le M$. In particular, $\bar{u}$ belongs to the functional space
$$
\left\{ u \: : \: u\vee\eps,\ u\wedge(-\eps)\in\sbv(\R^d)\ \forall\eps>0,\text{ $u = 0$ on $\R^d \setminus D$} \right\}.
$$
\el

The proof is based on the argument of \cite[Theorem 12]{berbuc2013}, but here Poincar\'e inequality \eqref{eq.poincare} plays a key role. To fix some notation, consider the rescaling function
$$r(M):=\frac{|D|^{1/d}}{|\Om_M|^{1/d}}\;,\qquad\hbox{where }\Om_M:=\bar{\Om}\cap\{\bar{u}>M\},$$
and the scaled set $\Om_M^\#:=r(M)\cdot\Om_M$ in such a way that $|\Om_M^\#|=|D|$.

\begin{proof}
Choose $\bar{u}_M := \bar{u} \wedge M$ as a test function (recall that $\bar{u}$ is nonnegative as a consequence of assumption (j4)) and use the minimality of $\bar{u}$ to write
$$\cF(\bar{u}) \leq \cF(\bar{u}\wedge M).$$
We obtain
\[\begin{aligned}
\int_{\Om_M}
\big(j(x,\bar{u},\nabla\bar{u})&-j(x,M,0)\big)\,dx + \int_{ J_{\bar{u}} \cap\{ \bar{u}^- > M \} } \left[ g(x,\bar{u}^+)+g(x,\bar{u}^-) \right]\,d\cH^{d-1}\\
&+\int_{J_{\bar{u}}\cap\{\bar{u}^-<M<\bar{u}^+\}} g(x,\bar{u}^+)\,d\cH^{d-1}\le0,
\end{aligned}\]
and we notice that the last addendum is positive. By assumption \eqref{eq.g.1} we get
$$
\int_{ J_{\bar{u}} \cap\Om_M } \left[g(x,\bar{u}^+)+g(x,\bar{u}^-)\right] \, d\cH^{d-1} \ge \beta_1 \int_{J_{\bar{u}}\cap\Om_M} \left[|\bar{u}^+|^q + |\bar{u}^-|^q \right]\, d\cH^{d-1}.
$$
Similarly, we can write
$$
\int_{\Om_M} \left(j(x,\bar{u},\nabla\bar{u})-j(x,M,0)\right) \, dx = \int_{\Om_M} \left(j(x,\bar{u},\nabla\bar{u})\pm j(x,\bar{u},0) - j(x,M,0)\right) \, dx
$$
and, using assumption \eqref{eq.f.1} yields
$$
\int_{\Om_M} \left(j(x,\bar{u},\nabla\bar{u})- j(x,\bar{u},0) \right) \, dx \ge L \int_{\Om_M} |\nabla \bar{u}|^p \, dx \ge L \int_{\Om_M} |\nabla \bar{u}|^q \, dx - L |\Om_M|.
$$
Putting all these inequalities together gives
\[\begin{aligned}
L\int_{\Om_M} |\nabla \bar{u}|^q\,dx&+\beta_1\int_{J_{\bar{u}}\cap\Om_M}\left[|\bar{u}^+|^q+|\bar{u}^-|^q\right]\,d\cH^{d-1}\\
& \le L|\Om_M| + \int_{\Om_M} \left(j(x,M,0)-j(x,\bar{u},0)\right) \, dx,
\end{aligned}\]
which, as a consequence of \eqref{eq.f.5}, gives
$$\int_{\Om_M} |\nabla \bar{u}|^q\,dx+\frac{\beta_1}{L}\int_{J_{\bar{u}}\cap\Om_M}\left[|\bar{u}^+|^q+|\bar{u}^-|^q \right]\,d\cH^{d-1}\le\left(1+\frac{C_j}{L}M^q\right)|\Om_M|.$$
Let $v := \max\{ \bar{u} - M, 0\}$. A simple computation shows that
\[\begin{aligned}
&\int_{\Om_M} |\nabla \bar{u}|^q\, dx = \int_{\R^d} |\nabla v|^q \, dx, \qquad J_v = J_{\bar{u}} \cap \Om_M,\\
&\int_{J_{\bar{u}}\cap\Om_M}\left[|\bar{u}^+|^q + |\bar{u}^-|^q \right]\, d\cH^{d-1} \ge \int_{J_v}\left[|v^+|^q + |v^-|^q\right]\,d\cH^{d-1}.
\end{aligned}\]
We can therefore apply Poincar\'e inequality \eqref{eq.poincare} to the function $v$ and infer that
$$
\int_{\Om_M} |\nabla \bar{u}|^q \, dx + \frac{\beta_1}{L} \int_{J_{\bar{u}}\cap\Om_M} \left[|\bar{u}^+|^q + |\bar{u}^-|^q \right]\, d\cH^{d-1} \ge \lambda_{\frac{\beta_1}{L},q}(\Om_M) \int_{\R^d} |v|^q \, dx,
$$
and the right-hand side can easily be rewritten by noticing that
$$\int_{\R^d} |v|^q \, dx = \int_{\Om_M} |\bar{u} - M|^q \, dx.$$
Now apply H\"older's inequality to estimate the latter from below, namely
$$\int_{\Om_M} |\bar{u} - M|^q \, dx \ge |\Om|^{-q/q'} \left[ \int_{\Om_M} (\bar{u} - M) \, dx \right]^q.$$
Let $f(M) := \int_{\Om_M} (\bar{u} - M) \, dx$ and put everything together to rewrite the inequality as
$$\lambda_{\beta_1/L,q}(\Om_M) f(M)^q \le \left(1 + \frac{C_j}{L} M^q \right) |\Om_M|^{q}.$$
The inequality is not precise enough because $f(M)$ and $|\Om_M|$ have the same exponent, so the idea is to now introduce the scaling $\Om_M^\#$ and write
$$\lambda_{\beta_1r(M)^{1-q}/L,q}(\Om_M^\#) f(M)^q r(M)^q \le \left(1 + \frac{C_j}{L} M^q \right) |\Om_M|^{q},$$
which, using the definition of $r(M)$, gives
$$\lambda_{\beta_1r(M)^{1-q}/L,q}(\Om_M^\#) f(M)^q r(M)^{q-1} |D|^{1/d} \le \left(1 + \frac{C_j}{L} M^q \right) |\Om_M|^{q+1/d}.$$
Since $|\Om_M| = - f'(M)$, we can rewrite the inequality as
\[\begin{aligned} 
-\frac{f'(M)}{f(M)^{qd/(qd+1)}}
&\ge\left[\left(1 + \frac{C_j}{L} M^q \right)^{-1} \lambda_{\beta_1r(M)^{1-q}/L,q}(\Om_M^\#) r(M)^{q-1}\right]^{d/(qd+1)}|D|^{1/(qd+1)}\\
&\ge\left[\left(1+\frac{C_j}{L} M^q \right)^{-1}\lambda_{\beta_1r(M)^{1-q}/L,q}(B) r(M)^{q-1} \right]^{d/(qd+1)}|D|^{1/(qd+1)},
\end{aligned} \]
where $B$ is a ball of volume $|D|$. Indeed, the eigenvalue $\lambda_{b,q}(\cdot)$ is minimized (at fixed volume) by the ball, so
$$
\lambda_{b,q}(\Om_M) \ge \lambda_{b,q}(B_M),\quad \text{with $|B_M| = |\Om_M|$}.
$$
On the other hand, given a domain $C$ and a positive parameter $t$, it is
$$
\lambda_{b,q}(tC) = t^{-2} \lambda_{b,q}(C),
$$
and therefore $\lambda_{b,q}(B_r)$ is monotone decreasing with respect to the radius $r$, hence with respect to the volume. In particular, taking into account that $\Om_M\subset D$, we have
$$\lambda_{\beta_1r(M)^{1-q}/L,q}(\Om_M^\#) \ge \lambda_{\beta_1r(M)^{1-q}/L,q}(B).$$
Integrating in $M$ between $0$ and $T < \|\bar{u}\|_\infty$ and taking into account that $f$ is a positive function yields
$$
f(0)^{1/(qd+1)} \ge c_{|D|,q} \int_0^T \left[ \left(1 + \frac{C_j}{L} M^q \right)^{-1} \lambda_{\beta_1r(M)^{1-q}/L,q}(B) r(M)^{q-1} \right]^{d/(qd+1)}\,dM.
$$
The left-hand side is bounded because $f(0)$ is bounded from above by $\|\bar{u}\|_{L^1(\R^d)}$; as for the right-hand side, we apply \cite[Lemma 13]{berbuc2013} to infer that
$$
\lim_{M\to\|\bar{u}\|_\infty}\left(1+\frac{C_j}{L} M^q\right)^{-1}\lambda_{\beta_1r(M)^{1-q}/L,q}(B) r(M)^{q-1}=\left(1+\frac{C_j}{L}\|\bar{u}\|_\infty^q \right)^{-1} \frac{\beta_1}{L} d.
$$
This yields $\| \bar{u} \|_\infty < \infty$ as a consequence of the fact that $qd/(qd+1)\in(0,1)$, concluding the proof.
\end{proof}

\brmk
Thanks to the $L^\infty$ estimate above, we deduce that $\bar{u}$ given in the proof of Theorem \ref{thm.main.1} belongs to
$$\big\{ u \ :\ u\vee\eps,\ u\wedge(-\eps)\in\sbv(\R^d)\ \forall\eps>0,\text{ $u = 0$ on $\R^d\setminus D$}\big\}.$$
\ermk

%%%%%%%%%%%%%%%%%%%%%%%%%
\subsection{Bound from below}\label{sec.sbvregularit}

To prove that $\bar{u}$ is bounded away from zero, we follow the approach of \cite[Theorem 3.5]{bugi16}. 

\bd[Supersolution]
We say that $w \in \mathscr{F}_D \cap \{u \geq 0\}$ is a {\em supersolution} for the functional
$$
\cL(u) := \int_{\R^d} \left[ j(x,u,\nabla u) - j(x,0,0)\right] \, dx + \int_{J_u} \left[ g(x,u^+) + g(x,u^-) \right] \, d\cH^{d-1}
$$
if for every $v \in \mathscr{F}_D \cap \{ u \geq 0\}$ with $0\leq w \leq v$ we have
$$
\cL(w) \leq \cL(v).
$$
\ed

\bl
Let $j$ and $g$ be as in Theorem \ref{thm.main.2}, and let $\bar{u}$ be the solution given in Theorem \ref{thm.main.1}. Then the positive and negative parts $\bar{u}_{\ge0}:=\bar{u}\vee 0$ and $\bar{u}_{\le0}:=(-\bar{u})\vee0$ are supersolutions for $\cL$.
\el

\begin{proof}
Suppose that $\bar{u}_{\ge0}$ is not a supersolution of $\cL$ and let $v\in\mathscr{F}_D$ be a function with $0\le\bar{u}_{\ge0}\le v$ satisfying the strict inequality $\cL(v)<\cL(\bar{u}_{\ge0})$. To find a contradiction we define the function
\[
\bar{v}(x) := \begin{cases} \bar{u}(x) & \text{if $\bar{u}(x) \leq 0$}, \\ v(x) & \text{if $\bar{u}(x) > 0$},
\end{cases}
\]
and we obtain
\[\begin{aligned}
\cF(\bar{u}) &- \cF(\bar{v}) \geq \int_{ \{\bar{u} > 0 \} } \left[ j(x,\bar{u},\nabla \bar{u})-j(x,v,\nabla v) \right] \, dx
\\[.6em] & + \int_{ \{ \bar{u} > 0 \} \cap J_{\bar{u}} } \left[ g(x,\bar{u}^+) + g(x,\bar{u}^-) \right]\,d\cH^{d-1}-\int_{ \{ \bar{u} > 0 \} \cap J_{v} } \left[ g(x,v^+) + g(x,v^-) \right] \, d\cH^{d-1}
\end{aligned} \]
because $\bar{u}=\bar{v}$ on $\{\bar{u}< 0\}$. Finally, we use the inequality $\cL(v) < \cL(\bar{u}_{\geq 0})$ to infer that
$$
\cF(\bar{u}) - \cF(\bar{v}) > 0,
$$
which is in contradiction with the fact that $\bar{u}$ is a minimizer for $\cF$.

A similar proof can be done for the function $\bar{u}_{\le0}$ too.
\end{proof}

\brmk
Despite the fact that both $\bar{u}_{\geq 0}$ and $-\bar{u}_{\leq 0}$ are supersolutions for $\cF$, we still need to assume that (j4) holds so that
$$
\bar{u} > 0 \implies \bar{u} = \bar{u}_{\geq 0} \quad \text{(see Corollary \ref{cor.pos})}.
$$
In particular, in our case $\bar{u}$ is itself a supersolution for the functional $\cL$.
\ermk

\bt \label{boundbelow}
Let $j$ and $g$ be as in Theorem \ref{thm.main.2} and let $u$ be a supersolution in the sense above. Then there exists a positive $\alpha$ such that
\be\label{eq.boundbelow}
u \geq \alpha \quad \text{a.e. on $\{u>0\}$.}
\ee
\et

\begin{proof}
Let $\eps > 0$ be such that $u_\eps := \max\{u,\eps\} \in \sbv(\R^d)$. Then $u_\eps \in \mathscr{F}_D$ and, by comparison with $u$, we find that
\[ \begin{aligned}
\int_{\R^d} &\left[ j(x,u,\nabla u) - j(x,0,0)\right] \, dx + \int_{J_u} \left[ g(x,u^+) + g(x,u^-) \right] \, d\cH^{d-1} 
\\ & \leq \int_{\R^d} \left[ j(x,u_\eps,\nabla u_\eps) - j(x,0,0)\right] \, dx + \int_{J_{u_\eps}} \left[ g(x,u_\eps^+) + g(x,u_\eps^-) \right] \, d\cH^{d-1}.
\end{aligned} \]
Using (j2), (g2) and (g4) we infer that
\[\begin{aligned}
\int_{\{u \leq \eps\}}& \left[|\nabla u|^p + j(x,u,0) - j(x,\eps,0)\right] \, dx + \beta_1 \int_{J_u \cap \{u^- < u^+ \leq \eps\}} \left[ |u^+|^q + |u^-|^q \right] \, d\cH^{d-1}\\
&\le\beta_2\eps^q \cH^{d-1} \left(\partial^e \{ u > \eps \} \setminus J_u \right),
\end{aligned}\]
because the jump part on $\{u^-\ge\eps \} \cap J_u$ is the same while 
$$\int_{\{u^-\le\eps<u^+\}\cap J_u}\left[g(x,u^+)+g(x,u^-)\right]\,d\cH^{d-1}$$
only appears on the left-hand side of the inequality so it can be estimated from below by $0$. Finally, assumption (j4) gives that $j(x,\cdot,0)$ is decreasing, so that
$$\int_{\{u\le\eps\}}\left[j(x,u,0)-j(x,\eps,0)\right]\,dx\ge0$$
and this term can be removed from the estimate above. It follows that
$$
L\int_{\{u\le\eps\}} |\nabla u|^p\,dx+\beta_1\int_{\{u^-<u^+\le\eps\}\cap J_u}\!\!\!\left[|u^+|^q+|u^-|^q\right]\,d\cH^{d-1}\le\beta_2 \eps^q \cH^{d-1} \left(\partial^e\{ u > \eps \} \setminus J_u \right).
$$
This means that for almost every $\delta$ with $0 < \delta < \eps$ we have
$$
L\int_{\{u<\eps\}}|\nabla u|^p\,dx + \beta_1\delta^q \cH^{d-1} \left( \partial^e \{ \delta < u < \eps \} \cap J_u\right) \leq \beta_2 \eps^q \cH^{d-1} \left(\partial^e\{u>\eps\} \setminus J_u \right),
$$
in such a way that, setting
\begin{gather*}
E(\eps) := \int_{\{ u \leq \eps \}} |\nabla u|^p\,dx,\qquad
\gamma(\delta,\eps) := \cH^{d-1} \left( \partial^e \{ \delta < u < \eps \} \cap J_u\right), \\
h(\eps) := \cH^{d-1} \left(\partial^e\{u>\eps\}\setminus J_u \right),
\end{gather*}
we can rewrite the inequality as
\be \label{eq.3.14}
E(\eps) + \frac{\beta_1}{L} \delta^q \gamma(\delta,\eps) \leq \frac{\beta_2}{L} \eps^q h(\eps).
\ee
At this point one can simply adapt the proof of \cite[Theorem 3.5]{bugi16}, but there are a few differences due to the fact that
$$
L E(\eps) \leq \beta_2 \eps^q h(\eps) \implies \left[ \int_{\{u<\eps\}}|\nabla u|^p\,dx \right]^{1/p}\le\left(\frac{\beta_2}{L}\right)^{1/p} \eps^{q/p} h(\eps)^{1/p}.
$$
We will now go over the proof presented in \cite{bugi16}, pointing out the main changes we need to make and why \eqref{eq.f.3} plays a key role in our case.

\proofpart{0}{Setting of the problem.}
For $\eta>0$ set
$$\eps_i:=\frac56\eta+\frac{2^{-i}}{6}\eta\qquad\text{and}\qquad\delta_i=\frac23\eta-\frac{2^{-i}}{6}\eta$$
so that $\eps_i\to\eps_\infty:=\frac56\eta$ and $\delta_i\to\delta_\infty:=\frac23\eta$ as $i \to\infty$. If we define
$$\Om(\delta,\eps):=\{ \delta < u < \eps \},$$
then we only need to show that there exists $\eta_0 > 0$ such that
\be \label{eq.3.17}
\left| \Om \left( \delta_\infty,\eps_\infty \right) \right| \int_{\delta_\infty}^{\eps_\infty} h(s)\, ds = 0 \qquad \forall \eta < \eta_0.
\ee
Indeed, the isoperimetric inequality applied to $\Om(\delta,\eps)$ gives us
$$|\Om(\delta,\eps)|^{(d-1)/d} \leq C_d\left( h(\eps) + h(\delta) + \gamma(\delta,\eps) \right),$$
and applying \eqref{eq.3.14} with $\eta/2<\delta<\eps<\eta$ yields
\be\label{formnew}
|\Om(\delta,\eps)|^{(d-1)/d} \leq \underbrace{\frac{\beta_2}{\beta_1} C_d}_{=: C_{d,\beta}} \left( 1+2^q \right) \left[h(\eps)+h(\delta)\right].
\ee
This together with \eqref{eq.3.17} is enough to infer that $|\Om(\delta_\infty,\eps_\infty)|=0$,
which means that $u$ must be at least $\frac{5}{6}\eta_0>0$ almost everywhere on its support, concluding the proof of \eqref{eq.boundbelow}.

\proofpart{1}{The main inequalities.}
Let us set for $i\in\N$
$$
a_i := \int_{\delta_i}^{\eps_i} h(s)\,ds \qquad\text{and}\qquad b_i := |\Om(\delta_i,\eps_i)|.
$$
We claim that there are positive constants $c_1,c_2$ such that
$$a_i\le c_1\frac{2^i}{\eta^{1-q/p}} a_{i-1} b_{i-1}^{1/(dp')}
\qquad\text{and}\qquad
b_i\le c_2 \left(\frac{2^i}{\eta}\right)^{d/(d-1)} a_{i-1}^{d/(d-1)},$$
where $p'=p/(p-1)$ is the conjugate exponent of $p$. The estimate of $b_i$ is obtained exactly as in \cite[Theorem 3.5]{bugi16}. As for the $a_i$, the main difference ($\eta^{q/p-1}$ in place of $\eta^0$) lies in the fact that the coarea formula gives
\[\begin{aligned}
\int_\delta^\eps h(s)\,ds
&=\int_{\Om(\delta,\eps)} |\nabla u| \, dx \le|\Om(\delta,\eps)|^{1/p'}\|\nabla u\|_{L^p(\Om(\delta,\eps))}\\
&\le|\Om(\delta,\eps)|^{1/p'}\left(\frac{\beta_2}{L}\right)^{1/p}\eps^{q/p} h(\eps)^{1/p}\\
&\le|\Om(\delta,\eps)|^{1/(dp')}\left[C_{d,\beta}(1+2^q)\right]^{1/p'}\left(\frac{\beta_2}{L}\right)^{1/p}\left[h(\eps)+h(\delta)\right]^{1/p'}\eps^{q/p} h(\eps)^{1/p}\\
&\le|\Om(\delta,\eps)|^{1/(dp')}\left[C_{d,\beta}(1+2^q)\right]^{1/p'}\left(\frac{\beta_2}{L}\right)^{1/p}\eps^{q/p}\left[h(\eps)+h(\delta)\right],
\end{aligned}\]
where we have used \eqref{formnew}. Integrating both sides of the inequality with respect to $\epsilon$ on $[\epsilon_i,\epsilon_{i-1}]$ and to $\delta$ on $[\delta_{i-1},\delta]$ leads to
$$ a_i \left[ \frac{1}{6} 2^{-i} \eta \right]^2 \le\left[C_{d,\beta}(1+2^q)\right]^{1/p'}\left(\frac{\beta_2}{L}\right)^{1/p} |\Om(\delta_{i-1},\eps_{i-1})|^{1/(dp')}\eps^{q/p} \left[ \frac{1}{6} 2^{-i} \eta \right] a_{i-1},$$
and this concludes the proof of the claim.

\proofpart{2}{Combining the main inequalities.}
We claim that we can find $\alpha>0$ such that $U_i := a_i^\alpha b_i$ satisfies the inequality
$$U_i\le\frac{\tilde{c}}{\eta^{ d/(d-1) +\alpha(1-q/p) } } A^i U_{i-1}^\vartheta,$$
where $c,A > 0$ and $\vartheta > 1$. This is once again obtained as in \cite[Theorem 3.5]{bugi16} by taking $\alpha$ and $\vartheta$ solutions of the system
$$\begin{cases}
\displaystyle\alpha + \frac{d}{d-1} = \vartheta \alpha\\
\alpha=\vartheta dp',
\end{cases}$$
which gives
\be\label{eq.alpha}
\alpha=\frac{dp'}{2}\left(1+\sqrt{1+\frac{4}{(d-1)p'}}\right),
\ee
and consequently
$$\vartheta=\frac{\alpha}{dp'}=\frac{1}{2}\left(1+\sqrt{1+\frac{4}{(d-1)p'}}\right)>1.$$

\proofpart{3}{Decay for $E(\eps)$.}
We now show that there exist $\eps_0,c_0>0$ such that
\be\label{eq.decayest}
E(\eps)\le c_0 \eps^{p(q-1)/(p-1)}\qquad \forall\eps\le\eps_0.
\ee
Indeed, using the inequality $E(\eps) \leq \frac{\beta_2}{L} \eps^q h(\eps)$ and the coarea formula yields
\[ \begin{aligned}
\eps E(\eps) & \le \int_\eps^{2 \eps} E(s) \, ds \leq \frac{\beta_2}{L} 2^q \eps^q \int_\eps^{2\eps} h(s) \, ds
\\ & = \frac{\beta_2}{L} 2^q \eps^q \int_{\Om(\eps,2\eps)} |\nabla u| \, dx \leq \frac{\beta_2}{L} 2^q \eps^q |\Om(\eps,2\eps)|^{1/p'} E(2\eps)^{1/p}.
\end{aligned} \]
Taking $\eps_0$ so small that $(\beta_2/L) 2^q |\Om(0,2\eps_0)|^{1/p'}\le1$ gives
$$E(\eps)\le\eps^{q-1} E(2\eps)^{1/p}\qquad\text{for all $\eps\le\eps_0$},$$
and now an iterative argument (see, e.g., \cite[Lemma 3.6]{bugi16}) proves \eqref{eq.decayest}.

\proofpart{4}{Conclusion.}
 We claim that we can find $\eta$ such that
\be\label{eq.3.24}
U_0\le\tilde{c}^{-1/(\vartheta-1)} A^{-1/(\vartheta-1)^2}\eta^{(q-1)/(p-1)}\alpha,
\ee
where $\alpha$ is given in {\it Step 2}. To verify \eqref{eq.3.24}, we notice that
\[\begin{aligned}
U_0 & = \left| \Om \left( \frac{\eta}{2},\eta \right) \right| \left[ \int_{\eta/2}^\eta h(s) \, ds \right]^\alpha = \left| \Om \left( \frac{\eta}{2},\eta \right) \right| \left[ \int_{\Om(\frac{\eta}{2},\eta)} |\nabla u| \, dx \right]^\alpha
\\ & \le\left|\Om\left( \frac{\eta}{2},\eta \right) \right| \left[ E(\eta)^{1/p}\left|\Om \left( \frac{\eta}{2},\eta \right) \right|^{1/p'}\right]^\alpha
\\ & = \left| \Om \left( \frac{\eta}{2},\eta \right) \right|^{1+\alpha/p'} E(\eta)^{\alpha/p}
\end{aligned}\]
so that, thanks to the decay estimate \eqref{eq.decayest} obtained above, we conclude that
$$U_0\le c_0\left|\Om\left( \frac{\eta}{2},\eta \right) \right|^{1+\alpha/p'} \eta^{(q-1)/(p-1)}\alpha.$$
This means that \eqref{eq.3.24} is achieved if $\eta$ is so small that
$$c_0\left|\Om\left(\frac{\eta}{2},\eta \right) \right|^{1+\alpha/p'}\le\tilde{c}^{-1/(\vartheta-1)} A^{-1/(\vartheta-1)^2},$$
so all it remains is to use induction on $i$ and finally prove that
$$\lim_{i \to \infty} U_i = 0.$$
A simple computation shows that
\[\begin{aligned}
U_n & \leq \tilde{c}^{\frac{\vartheta^n-1}{\vartheta-1}} \eta^{ -\frac{\vartheta^n-1}{\vartheta-1} \left[ \frac{d}{d-1} + \alpha \left(1-\frac{q}{p}\right) \right]} A^{ \frac{\vartheta(\vartheta^n-1) - n \vartheta +n}{(\vartheta - 1)^2}} (U_0)^{\vartheta^n}
\\ & \leq \tilde{c}^{-\frac{1}{\vartheta - 1}} A^{\frac{\vartheta^n}{\vartheta-1}} \eta^{ \left[ \alpha \frac{q-1}{p-1} - \frac{d}{d-1} - \alpha \left(1-\frac{q}{p}\right) \right] \vartheta^n }
\\ & \leq \tilde{c}_1 \left[ A^{\frac{1}{\vartheta-1}} \eta^{\alpha \left(\frac{q-1}{p-1} + \frac{q}{p} - 1 \right)-\frac{d}{d-1}}\right]^{\vartheta^n}
\end{aligned}\]
so $U_n \xrightarrow{n\to \infty} 0$ if we can prove that the quantity inside the parenthesis is strictly smaller than $1$ for $\eta$ sufficiently small. More precisely, we require
$$\alpha\left(\frac{q-1}{p-1}+\frac{q}{p}-1\right)-\frac{d}{d-1}>0,$$
and this is exactly condition \eqref{eq.f.3} since $\alpha$ is given by formula \eqref{eq.alpha}. In particular, thanks to assumption (j4) we conclude the proof.
\end{proof}

\brmk
The assumption \eqref{eq.f.3} seems rather restrictive, but looking at numerical simulations suggests that
$$
\alpha \left(\frac{q-1}{p-1} + \frac{q}{p} - 1 \right) - \frac{d}{d-1} > 0
$$
holds in a significant portion of the plane. Indeed, in Figure \ref{fig.1} we take $d = 2$ and introduce the function
$$q(p):=\frac{p}{2p-1}\left[p+\frac{(p-1)^2}{(d-1)p}\frac{2}{1+\sqrt{1+\frac{4(p-1)}{(d-1)p}}}\right]$$
in such a way that $r_2(p)$ is exactly the lower bound in \eqref{eq.f.3}. The range of admissible values of $p$ and $q$ in which \eqref{eq.f.3} holds is given by the portion between the two lines.
\ermk

\begin{figure}[!h]
\centering
\includegraphics[scale=.8]{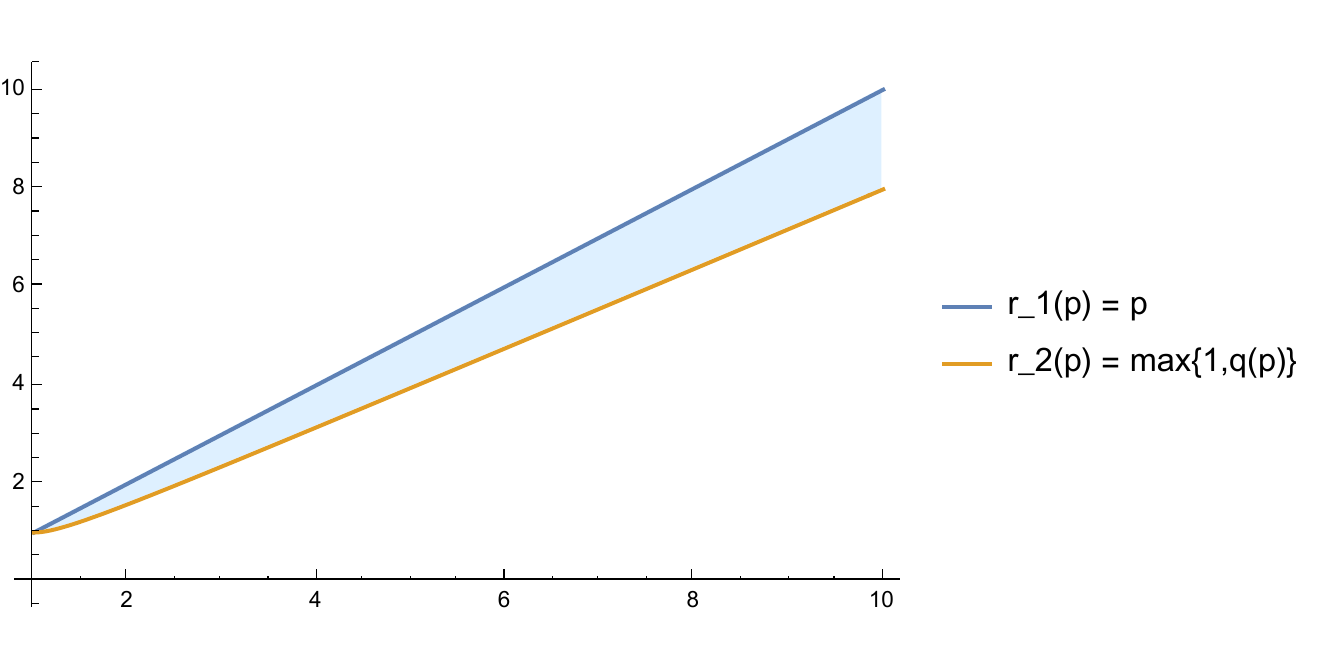}
\caption{The range of admissible values $q$ for $d=2$.}
\label{fig.1}
\end{figure}

We are now in a position to conclude the proof that $\bar{\Om} := \{ \bar{u} > 0\}$ has finite perimeter.

\begin{proof}[Proof of Theorem \ref{thm.main.2}]
If $\bar{u}$ is the solution given in Theorem \ref{thm.main.1}, then \eqref{eq.boundbelow} gives $\bar{u} = \bar{u} \vee \alpha$ for some $\alpha>0$. However, we know that the function $\bar{u} \vee \eps$ belongs to $\sbv(\R^d)$ and therefore using the bound from below and the chain rule in $\sbv$ we get
$$
\bar{u}\in\sbv(\R^d) \implies \per(\bar{\Om}) < \infty.
$$
Indeed, by definition, we have
$$
\chi_{\bar{\Om}} = \begin{cases} 0 & \text{if $\alpha^{-1} \bar{u} = 0$}, \\ 1 & \text{if $\alpha^{-1} \bar{u} \geq 1$},\end{cases}
$$
which means that $\chi_{\bar{\Om}}$ and $\alpha^{-1} \bar{u}$ are supported in the same set, namely $\bar{\Om}$. It follows that the perimeter can be estimated by
$$
\per(\bar{\Om}) = \sup_{\|\phi\|_\infty \leq 1} \int_{\R^d} \divs(\phi) \chi_{\bar{\Om}} \, dx \leq \alpha^{-1} \|\bar{u}\|_{\bv(\R^d)},
$$
and this last quantity is finite because $\bar{u} \in \sbv(\R^d)\subset\bv(\R^d)$ and $\alpha > 0$.
\end{proof}

%%%%%%%%%%%%%%%%%%%%%%%%%%%%%%%%%%%%%%%%%%%%%%%%%%%%%%%%%%%%
\section{Proof of Theorem \ref{thm.main.3}: the set $\bar{\Om}$ is open}

%%%%%%%%%%%%%%%%%%%%%%%%%
\subsection{Essential closedness of the jump set}

The goal of this section is to exploit the results obtained in \cite[Section 4]{bugi16} and adapt them to deal with our functional, taking into account that our model function is
$$j(x,u,\nabla u) = |\nabla u|^p - f(x) u + 1,$$
which leads to several issues related to the linear term $-f(x)u$. From now on, we shall always assume that $j$ satisfies the assumption (j5), which asserts that
$$j(x,u,\nabla u) - j(x,u,0) = L |\nabla u|^p.$$
Consider the associated Mumford-Shah functional
$$MS(u) := L \int_{\R^d} |\nabla u|^p \, dx + \cH^{d-1}(J_u)$$
in which the linear part does not appear. We start by recalling the notion of almost-quasi minimality for the Mumford-Shah functional:

\bd
Let $u \in \sbv_{\mathrm{loc}}^p(\R^d)$ be a function such that $u = 0$ in $\R^d \setminus D$. We say that $u$ is an almost-quasi minimizer for the functional $MS(u)$ with Dirichlet boundary conditions if there are $\Lambda \geq 1$, $\alpha > 0$ and $c_\alpha > 0$ such that
\[\begin{aligned}
\int_{B_\rho(x_0)} L |\nabla u|^p \, dx & + \cH^{d-1} \left(J_u \cap \bar{B}_\rho(x_0) \right)
\\ & \le \int_{B_\rho(x_0)} L |\nabla v|^p \, dx + \Lambda \cH^{d-1}\left(J_v \cap \bar{B}_\rho(x_0) \right) + c_\alpha \rho^{d-1 + \alpha}
\end{aligned}\]
for all $B_\rho(x_0) \subset \subset D$ and for every $v \in \sbv_{\mathrm{loc}}^p(\R^d)$, $v = 0$ in $\R^d \setminus D$ and
$$\{v\ne u\}\subset B_\rho(x_0).$$
\ed

The following result was proved in \cite[Theorem 2.3]{bugi16} with a function $f(x,\nabla u)$ in place of $j(x,u,\nabla u) - j(x,u,0)$ under suitable assumptions which are satisfied with
$$f(x,\nabla u) = C |\nabla u|^p.$$

\bt
Let $u \in \sbv_{\mathrm{loc}}^p(D)$ be an almost-quasi minimizer of $MS(u)$ with Dirichlet boundary conditions according to the definition above. Then
$$\cH^{d-1}\left((\bar{J}_u\setminus J_u)\cap D\right)=0,$$
which means that the jump set of $u$ is essentially closed in $D$.
\et

\bpr \label{prop.1aus}
Suppose that $j$ satisfies (j1)--(j5) and $g$ satisfies (g1)--(g4). Let $\bar{u} \in \sbv \cap L^\infty(\R^d)$ be the minimizer of $\cF$ given in Theorem \ref{thm.main.1}. Then
$$
\bar{u} > \alpha \quad \text{a.e. on $\{ \bar{u} > 0 \}$}
$$
for some $\alpha>0$, and the function $(2\beta_1)^{1/q} \alpha \bar{u}$ is an almost-quasi minimizer of the Mumford-Shah functional
$$
MS(u) := L \int_{\R^d} |\nabla u|^p \, dx + \cH^{d-1}(J_u)
$$
with Dirichlet boundary conditions on $D$.
\epr

\begin{proof}
The existence of the constant $\alpha$ is given in Theorem \ref{thm.main.3}, so let $B_\rho(x_0) \subset D$ and take any $v \in \sbv_{\mathrm{loc}}(D)$ satisfying $\{v\ne\bar{u}\}\subset B_\rho(x_0)$. Without loss of generality we can replace $v$ with $w := (v \wedge M) \vee 0$, where $M \geq \|\bar{u}\|_\infty$ is the constant given in Lemma \ref{lemma.boundinf}. Comparing $\bar{u}$ and $w$ we get
\[
\cF(\bar{u}) \leq \cF(w),
\]
which immediately translates to
\[ \begin{aligned}
\int_{\R^d} L |\nabla& \bar{u}|^p \, dx + \int_{\R^d} j(x,\bar{u},0) \, dx + \int_{J_{\bar{u}}} \left[ g(x,\bar{u}^+) + g(x,\bar{u}^-) \right] \, d\cH^{d-1}
\\ & \le \int_{\R^d} L |\nabla w|^p \, dx + \int_{\R^d} j(x,w,0) \, dx + \int_{J_w} \left[ g(x,w^+) + g(x,w^-) \right] \, d\cH^{d-1}.
\end{aligned}\]
Since $\bar{u}$ and $w$ coincide outside of $B_\rho(x_0)$, we find that
\[ \begin{aligned}
\int_{B_\rho(x_0)} L |\nabla & \bar{u}|^p \, dx + \int_{B_\rho(x_0)} \left[ j(x,\bar{u},0) - j(x,w,0) \right] \, dx + 2 \beta_1 \alpha^q \cH^{d-1}\left( J_{\bar{u}} \cap \bar{B}_\rho(x_0) \right)
\\ & \le \int_{\R^d} L |\nabla w|^p \, dx + 2 \beta_2 M^q \cH^{d-1} \left( J_w \cap \bar{B}_\rho(x_0) \right) + \gamma \omega_d \rho^d.
\end{aligned}\]
We now apply assumption \eqref{eq.f.5} to infer that
$$
\int_{B_\rho(x_0)} \left[ j(x,\bar{u},0) - j(x,w,0) \right] \, dx \ge - C_j |B_\rho(x_0)| \| w \|_{L^q(\R^d)}^q \ge - C_j' \rho^d,
$$
and since we have
\[ \begin{aligned}
\int_{B_\rho(x_0)} L |\nabla
&\bar{u}|^p\,dx+2 \beta_1 \alpha^q \cH^{d-1}\left( J_{\bar{u}} \cap \bar{B}_\rho(x_0) \right)\\
&\le\int_{\R^d} L |\nabla w|^p\,dx+2\beta_2 M^q \cH^{d-1} \left( J_w \cap \bar{B}_\rho(x_0) \right) + (\gamma \omega_d + C_j')\rho^d,
\end{aligned}\]
this leads to the conclusion.
\end{proof}

%%%%%%%%%%%%%%%%%%%%%%%%%
\subsection{Proof of the main result}

We now use Proposition \ref{prop.1aus} to prove that $\bar{\Om} = \{ \bar{u} \neq 0\}$ is open and thus belongs to the class $\cA(D)$. As a consequence, we also show that
$$\inf\big\{\cJ(\Om)\ :\ \Om\in\cA(D),\text{ $\Om$ Lipschitz}\big\}=\cJ(\bar{\Om}).$$

\begin{proof}[Proof of Theorem \ref{thm.main.3}]
Let $\bar{u}$ be the minimizer given in Proposition \ref{prop.1aus}. Then
\begin{equation}\label{eq.mm1}
\cH^{d-1}\big((\bar{J}_{\bar{u}}\setminus J_{\bar{u}}) \cap D\big) = 0,
\end{equation}
which means that $J_{\bar{u}}$ is essentially closed in $D$. Also, the lower bound \eqref{eq.boundbelow} gives $\cH^{d-1}(J_{\bar{u}}) < \infty$. If $\bar{\Om}$ is the connected component of $D\setminus J_{\bar{u}}$ on which $\bar{u}$ does not vanish (we have just one component by minimality), then $\partial\bar{\Om}\subset\left(\overline{J_{\bar{u}}}\cap D\right)\cup\partial D$ implies $\bar{\Om}$ open. Moreover, we have
$$
\int_{\partial \bar{\Om} \setminus J_{\bar{u}}} \left[ g(x,\bar{u}^+) + g(x,\bar{u}^-) \right] \, \cH^{d-1} = 0
$$
since \eqref{eq.mm1} implies
$$
\cH^{d-1}\big((\partial \bar{\Om} \setminus J_{\bar{u}}) \cap D\big)=0,
$$
while $\bar{u}^{\pm} = 0$ almost everywhere on the portion on the boundary of $D$, because the latter is Lipschitz and hence $\bar{u}^-= 0$ almost everywhere on $\partial D$. To prove that
$$
\inf\big\{\cJ(\Om)\ :\ \Om\in\cA(D),\text{ $\Om$ Lipschitz}\big\}=\cJ(\bar{\Om})
$$
we use Proposition \ref{prop.2aus} to find for each $\eps > 0$ a function $w \in \mathscr{F}_D$ with $J_w \subset D$ such that
$$\cF(w) < \cF(\bar{u}) + \eps. $$
We now follow the approach of \cite{bugi16}. By \cite[Theorem 3.1]{cortoa}, we can find a sequence $w_k \in \mathscr{F}_D$ with $J_{w_k}$ essentially closed and polyhedral, $w_k \in W^{1,p}(D\setminus J_{w_k})$ and such that
\[ \begin{aligned}
& w_k \xrightarrow{k \to \infty} w \quad \text{strongly in $L^p(\R^d)$},
\\ & \nabla w_k \xrightarrow{k \to \infty} \nabla w \quad \text{strongly in $L^p(\R^d)$},
\\ & \cF(w_k) \xrightarrow{k\to \infty} \cF(w).
\end{aligned} \]
The set $D \setminus \bar{J}_{w_k}$ is open, but the boundary is only Lipschitz outside of a $\cH^{d-1}$-measure zero (because $J_{w_k}$ is essentially closed) subset $A \subseteq \bar{J}_{w_k} \setminus J_{w_k}$. We can cover $A$ by arbitrarily small ``holes'' with polyhedral boundary $B_1,\ldots,B_N$ in such a way that
$$H_k:=J_{w_k}\cup\bigg(\bigcup_{j = 1}^N B_j \bigg)$$
has polyhedral boundary (thus Lipschitz). Now the set $\Om_k:=D\setminus\bar{H}_k$ belongs to $\cA(D)$ and has a Lipschitz boundary, and therefore the restriction of $w_k$ to $\Om_k$ is a competitor for the functional $\cJ(\cdot)$. We can consider holes so small that
$$\cJ(\Om_k) \le \int_{\Om_k} j(x,w_k,\nabla w_k) \, dx + \int_{\partial \Om_k} g(x,w_k) \, d\cH^{d-1} \le \cF(w_k) + \eps \le \cJ(\bar{\Om}) + \eps$$
holds for $k$ sufficiently large so, by taking a sequence $\eps_n\to0$ and a corresponding sequence of $k_n$ for which the inequality above holds, we obtain the thesis.
\end{proof}

\bpr\label{prop.2aus}
Let $v \in \mathscr{F}_D \cap L^\infty(\R^d)$ with $\cH^{d-1}(J_v)<\infty$. For all $\eps > 0$ there is $w \in \mathscr{F}_D\cap L^\infty(\R^d)$ such that
$$
J_w \subset D,\qquad\cH^{d-1}(J_w)<\infty,\qquad\cF(w) \le \cF(v) + \eps.
$$
\epr

The proof follows the same argument as in \cite[Proposition 3.12]{bugi16}, with $B = \varnothing$, but there is an important difference which is the definition of $v^\xi$. More precisely, we set
\[ 
v_i^\xi(y) := \begin{cases} v_i(y',y_d+\xi) & \text{if $y_d < f_i(y')-\xi$}, \\ \|v\|_\infty \psi_i(y',f_i(y')) & \text{if $y_d \ge f_i(y')-\xi$}, \end{cases}
\]
where $\psi_i$ is the partition of unity introduced in the reference paper. The rest of the proof follows in the same way because the term
$$\int_{\R^d}\left[ j(x,v^\xi,0)-j(x,v,0)\right]\,dx$$
can be easily estimated by a constant when $\xi > 0$ is sufficiently small, taking into account that the support of $v_i^\xi$ is as close as we want to the one of $v_i$ by definition.

%%%%%%%%%%%%%%%%%%%%%%%%%%%%%%%%%%%%%%%%%%%%%%%%%%%%%%%%%%%%
\section{Further comments and open problems}\label{sec:openpbs}

In this section we raise some question that look, in our opinion, very interesting and that could help to better understand the shape optimization problems with Robin conditions at the free boundary.

\medskip

{\bf Question 1. }We obtained under very mild assumptions the existence of an optimal domain $\bar{\Om}$ in the class of measurable subsets of $D$. Under some slightly stronger assumptions we showed that $\bar{\Om}$ has a finite perimeter and that $\bar{\Om}$ is actually an open set. It would be very interesting to investigate about further regularity properties of $\bar{\Om}$. For instance, according to the results in \cite{buccou}, the boundary of $\bar{\Om}$ cannot have too sharp cuspids, and also $\bar{\Om}$ does not have too many internal fractures. The question if boundary cusps or internal fractures may actually occur for an optimal domain $\bar{\Om}$ is still open.

\medskip

{\bf Question 2. }The investigation on the higher regularity of optimal domains is also interesting. Taking the model case
\be\label{modelcase}
j(x,s,z)=|z|^p-f(x)s+1,\qquad g(x,s)=|s|^p\qquad\text{with }p>1
\ee
is it possible to obtain $C^{1,\alpha}$ regularity of the free boundary assuming $f$ nonnegative and bounded?

\medskip

{\bf Question 3. }In our model case \eqref{modelcase} a key assumption in order to show the existence of optimal sets $\bar{\Om}$ that are open and with finite perimeter is the nonnegativity and boundedness of the datum $f$. It would be interesting to see if our results still hold with weaker assumptions on $f$, such as
$$\exists E_1,E_2\subset D,\ |E_1|,|E_2|>0\ :\ f\,\big|_{E_1}>0\quad\text{and}\quad f\,\big|_{E_2}<0,$$
or some $L^p$-norm of $f$ is bounded while the $\infty$-norm is not. A similar question for Dirichlet boundary conditions has been recently considered in \cite{butshr} by constructing right-hand sides $f\in W^{-1,p'}(D)\cap L^1(D)$ such that the optimal set $\bar{\Om}$ can be any $p$-quasi open set $\Om\subset D$. We expect a similar behavior in our case, even if some new technical difficulties arise due to the fact that solutions of a relaxed Robin problem of the form \eqref{functionalrelaxed} in a general domain $\Om$ may have internal discontinuities.

%; however, in our case the same strategy does not work because if we fix $\Om_0$ and consider the corresponding torsion function $w_0$ starting from the functional
%$$\int_{D \cap \{u \neq 0\}} |\nabla u|^2 \,dx + \int_{D \cap \{u \neq 0\}} (- f(x)u + 1) \, dx + \int_{J_u} \left[ |u^+|^2 + |u^-|^2\right] \, d\cH^{d-1}.$$
%The problem is that the domain corresponding to $w_0$ may not coincide with $\Om_0$ because, a priori, internal fractures could appear.

%%%%%%%%%%%%%%%%%%%%%%%%%%%%%%%%%%%%%%%%%%%%%%%%%%%%%%%%%%%%
\section*{Acknowledgements} The work of GB is part of the project 2017TEXA3H {\it``Gradient flows, Optimal Transport and Metric Measure Structures} funded by the Italian Ministry of Research and University.

\bigskip
{\small\noindent
Giuseppe Buttazzo:
Dipartimento di Matematica, Universit\`a di Pisa\\
Largo B. Pontecorvo 5, 56127 Pisa - ITALY\\
{\tt giuseppe.buttazzo@unipi.it}\\
{\tt http://www.dm.unipi.it/pages/buttazzo/}

\bigskip\noindent
Francesco Paolo Maiale:
Scuola Normale Superiore\\
Piazza dei Cavalieri 7, 56126 Pisa - ITALY\\
{\tt francesco.maiale@sns.it}\\
{\tt https://poisson.phc.dm.unipi.it/\char`\~fpmaiale/}

\end{document}